\numberwithin{equation}{section}
\newtheorem{proposition}{Proposition}[section]
\newtheorem{theorem}[proposition]{Theorem}
\newtheorem{lemma}[proposition]{Lemma}
\newtheorem{corollary}[proposition]{Corollary}
\theoremstyle{definition}
\newtheorem{definition}[proposition]{Definition}
\newtheorem{remark}[proposition]{Remark}
\renewenvironment{proof}{\smallskip\noindent\emph{\textbf{Proof.}}%
  \hspace{1pt}}{\hspace{-5pt}{\nobreak\quad\nobreak\hfill\nobreak%
    $\square$\vspace{2pt}\par}\smallskip\goodbreak}
\newenvironment{proofof}[1]{\smallskip\noindent{\textbf{Proof~of~#1.}}%
  \hspace{1pt}}{\hspace{-5pt}{\nobreak\quad\nobreak\hfill\nobreak%
    $\square$\vspace{2pt}\par}\smallskip\goodbreak}
\newcommand{\C}[1]{\mathbf{C}^{#1}}
\renewcommand{\L}[1]{{\mathbf{L}^#1}}
\newcommand{\Lloc}[1]{{\mathbf{L}_{loc}^{#1}}}
\newcommand{\W}[2]{{\mathbf{W}^{#1,#2}}}
\newcommand{\modulo}[1]{{\left|#1\right|}}
\newcommand{\reali}{{\mathbb{R}}}
\newcommand{\naturali}{{\mathbb{N}}}
\renewcommand{\epsilon}{\varepsilon}
\renewcommand{\phi}{\varphi}
\renewcommand{\theta}{\vartheta}
\newcommand{\spt}{\mathop{\rm spt}}
\newcommand{\sgn}{\mathop{\rm sgn}}
\newcommand{\diam}{\mathop{\rm diam}}
\renewcommand{\d}[1]{\mathinner{\mathrm{d}{#1}}}
\renewcommand{\div}{\mathop{\rm div}}
\newcommand{\co}{\mathop{\rm co}}
\newcommand{\Int}{\mathop{\rm int}}
\begin{document}

\title{Estimates of the Domain of Dependence for Scalar Conservation Laws}

\author[1, 2]{\small Nikolay Pogodaev}

\affil[1]{\footnotesize Krasovskii Institute of Mathematics and Mechanics \authorcr 
Kovalevskay str., 16, Yekaterinburg, 620990, Russia}
\affil[2]{\footnotesize Matrosov Institute for System Dynamics and Control
  Theory\authorcr Lermontov str., 134, Irkutsk, 664033, Russia}

\date{}


\maketitle

\begin{abstract}

  \noindent We consider the Cauchy problem for a multidimensional scalar
  conservation law and construct an outer estimate for the domain of dependence
  of its Kru\v{z}kov solution. The estimate can be represented as the
  controllability set of a specific differential inclusion. In addition,
  reachable sets of this inclusion provide outer estimates for the support of
  the wave profiles. Both results follow from a modified version of the
  classical Kru\v{z}kov uniqueness theorem, which we also present in the paper. Finally,
  the results are applied to a control problem consisting in steering a
  distributed quantity to a given set.

  \medskip

  \noindent\textit{2010~Mathematics Subject Classification: 35L65, 93B03}

  \medskip

  \noindent\textit{Keywords: conservation laws, domain of dependence, the Kru\v{z}kov theorem, differential inclusions, reachable sets.}
\end{abstract}

\section{Introduction}

The paper aims at constructing an outer estimate for the domain of dependence of the Kru\v{z}kov solution to
the following Cauchy problem
\begin{gather}
\label{eq:genpde}
  \partial_t u + \div \left(f(t,x,u)\right) = 0,\\
\label{eq:intcon}
  u(0,x) = u_0(x), \qquad x\in\reali^n.
\end{gather}
Recall the precise definition of the domain of dependence.

\begin{definition}[D. Serre~\cite{Serre1}]
\label{def:dd}
Let $u$ be a Kru\v{z}kov solution to~\eqref{eq:genpde},~\eqref{eq:intcon}.
The \emph{domain of dependence} of $u$ at a point $(t,x)$ is the smallest compact set $\mathcal{D}_u(t,x)\subset\reali^n$ such that, for every bounded function
$w$ with compact support disjoint from $\mathcal{D}_u(t,x)$ and every positive $\epsilon$
small enough, the solution of the Cauchy problem with the
initial condition $v_0 =u_0 + \epsilon w$ coincides with $u$ at $(t,x)$.
\end{definition}

Two things concerning this definition must be clarified. First of all, since initial functions $u_0$ usually belong to $\L\infty(\reali^n)$, by the support of $u_0$ we mean
the support of the measure $u_0\mathcal{L}^n$, i.e., the smallest closed set $A$ such that
\[
\int_{\reali^n\setminus A} u_0(x)\d x = 0.
\]
Throughout the paper, this set is denoted by $\spt u_0$.

Another issue follows from the fact that a Kru\v{z}kov solution is defined as a
map of class 
$\C0\left([0,\infty);\Lloc1(\reali^n)\right)$. By saying that two solutions $u$ and $v$ coincide at $(t,x)$,
we mean that
\begin{equation}\label{eq:conicide}
\lim_{r\to 0}\frac{1}{\mathcal{L}^n\left(B(x,r)\right)}\int_{B(x,r)}|u(t,y)-v(t,y)|\d y = 0,
\end{equation}
where $B(x,r)$ denotes the closed ball of radius $r$ centered at $x$.

For a very special case of~\eqref{eq:genpde}, when the equation is one-dimensional and the flow is convex in $u$,
the domain of dependence can be found explicitly by the method of generalized characteristics~\cite{Dafermos}. In the 
general case, a rough outer estimate is provided by the Kru\v{z}kov uniqueness theorem~\cite{Kruzkov}:
\[
\mathcal{D}_u(t,x)\subseteq B(x,ct),
\]
where $c=\sup\left\{|\partial_u f(s,x,u)|\;\colon\; s\in [0,t],\; x\in \reali^n,\; u\in \reali\right\}$.

Note that the ball $B(x,ct)$ is exactly \emph{the controllability set}
of the differential inclusion
\begin{equation}
\label{eq:simpledi}
\dot y(s)\in B(0,c),
\end{equation}
i.e., the set of all points $a\in \reali^n$ that can be connected with $x$ by a 
trajectory $y\colon [0,t]\to\reali^n$ of~\eqref{eq:simpledi}. 

Encouraged by this observation, we are going to find a differential inclusion
\begin{equation}
\label{eq:maindi}
\dot y(s)\in F\left(s,y(s)\right),
\end{equation}
whose right-hand side is smaller than $B(0,c)$ and
whose controllability set still gives an outer estimate of $\mathcal{D}_u(t,x)$. As we shall see, 
a possible choice for such set-valued map $F$ is
\[
  F(s,y) = \co\partial_uf\left(s,y,\left[a(s),b(s)\right]\right),
\]
where $a$ and $b$ are certain upper and lower bounds of $u$, while `$\co$' denotes the convex hull of a set.

To check that the controllability set contains the domain of dependence, we use a modified version of the classical Kru\v{z}kov theorem. In this version the cone 
appearing in the original theorem is substituted by the backward integral funnel of~\eqref{eq:maindi}. A difficult moment
appears at this stage: our arguments work only when the integral funnel is sufficiently regular. 
Hence, before proving the theorem, we spend some time discussing the corresponding regularity issues. 


The paper is organized as follows. In Section 2 we recall the necessary
information concerning integral funnels and their regularity.
A modified version of the Kru\v{z}kov theorem is established in
Section 3. We exploit this theorem in Section 4 to obtain estimates for the
domain of dependence and the support of the wave profile $u(t,\cdot)$. Next, we
use the latter result 
for analysing a control problem consisting in steering a distributed
quantity  to a given set. For the ease of presentation, the proofs of various
technical lemmas and propositions, devoted solely to the properties of
differential inclusions, are collected in Appendix.

\section{Integral funnels and their regularity}

In this section we discuss the notion of integral funnel and its essential properties. 
We begin with a short list of notations which are used throughout the paper.

\paragraph{Notation.} 
In what follows, $|x|$ is the Euclidean norm and $x\cdot y$ is
the scalar product of $x,y\in\reali^n$. Given a closed set $A\subseteq\reali^n$, we denote by $d_A(x)$ the distance
between $x\in\reali^n$ and $A$, i.e., $d_A(x) = \inf_{a\in A}|x-a|$, and by $B(A,r)$ the closed $r$-neighbourhood of $A$,
i.e., $B(A,r) = \{x\;\colon\; d_A(x)\leq r\}$. Finally, given an arbitrary set $A$, we use the symbol $\co A$ for its closed convex hull,
$A^c$ for its complement, $\partial A$ for its topological boundary, and
$\mathcal{H}^n(A)$ for its Hausdorff measure.

\vspace{10pt}

Let us consider a set-valued map $F=F(t,x)$ defined by the rule
\begin{equation}
\label{eq:F}
F(t,x) = \co g(t,x,U),
\end{equation}
where $U\subset\reali^l$ is compact and $g\colon[0,\infty)\times\reali^n\times U\to\reali^n$ satisfies the 
following assumptions:
\begin{enumerate}
\item[$(\mathbf{g_{1}})$] $g$ is continuous;
\item[$(\mathbf{g_{2}})$] for each $u$ the map $g(\cdot,\cdot,u)$ is Lipschitz
with modulus $L_1$:
\[
|g(t,x,u)-g(t',x',u)|\leq L_1\left(|t-t'|+|x-x'|\right)
\;\text{for all }\; t,t'\in [0,\infty),\;x,x'\in \mathbb{R}^n,\;u\in U;
\]
\item[$(\mathbf{g_{3}})$] for each $t$ and $u$ the map $g(t,\cdot,u)$ is continuously differentiable and its
derivative $\frac{\partial g}{\partial x}(t,\cdot, u)$ is Lipschitz with modulus $L_2$:
\[
\left|\frac{\partial g}{\partial x} (t,x,u)-\frac{\partial g}{\partial x} (t,x',u)\right|\leq L_2|x-x'|
\;\;\text{for all }\; t\in [0,\infty),\;x,x'\in \mathbb{R}^n,\; u\in U.
\]
\end{enumerate}

Fix a compact interval $[\tau_0,\tau]\subset [0,\infty)$ and consider the following differential inclusion
\begin{equation}
\label{eq:inclusion}
\dot x(t) \in F\left(t,x(t)\right)\qquad\mbox{for a.e.}\;\; t\in [\tau_0,\tau].
\end{equation}

The notion of integral funnel generalizes, in some sense, the usual notion of trajectory.

\begin{definition}
Let $K\subset \reali^n$ be compact. The set
\[
\Omega^+(K) = 
\left\{\left(t,x(t)\right)\;\colon\;t\in [\tau_0,\tau],\;x(\cdot)\;
\mbox{is a solution to}~\eqref{eq:inclusion},
\;x(\tau_0)\in K\right\}.
\]
is called  the \emph{forward integral funnel} issuing from $K$.
\end{definition}

Under assumptions $(\mathbf{g_{1}})$--$(\mathbf{g_{3}})$ the integral funnel is
a nonempty compact subset of $\reali^{n+1}$. Moreover, each ``slice''
\[
\Omega^+_t(K) = \{x\;\colon\;(t,x)\in \Omega^+(K)\}, \quad t\in [\tau_0,\tau],
\]
called the \emph{reachable set} of~\eqref{eq:inclusion} at time $t$, is nonempty and compact in $\reali^n$ (see, e.g.,~\cite{Tolstonogov}).

Besides compactness, the funnel $\Omega^+(K)$ has certain regularity properties. To be more precise,
we need the following extra definitions.

\begin{definition}
\label{def:tubular}
A set $E \subseteq\reali^n$ is called a \emph{tubular neighborhood}\footnote{The synonyms are: sets with interior sphere property (P. Cannarsa, H. Frankowska), sets with interior
ball property (O. Alvarez, P. Cardaliaguet, R. Monneau), sets with positive erosion (T. Lorenz), parallel sets (L. Ambrosio,
A. Colesanti, E. Villa).} (of a subset of $\reali^n$) if $E = B(A,r)$ for a
closed set $A\subset E$ and a positive $r$.
\end{definition}

\begin{definition}
\label{def:rect}
A set $E\subset\reali^n$ is said to be \emph{$m$-rectifiable} if there exists a Lipschitz function $f$ mapping a
\emph{bounded} subset $A \subset \reali^m$ onto $E$\footnote{In particular, the definition implies that all $m$-rectifiable sets have finite $m$-dimensional
Hausdorff measure.}.
\end{definition}

\begin{definition}
\label{def:content}
Let $E\subseteq\reali^n$. The limit 
$\mathcal{SM}^n(E)=\lim_{r\to 0+}\frac{1}{r}\mathcal{H}^n\left(B(E,r)\setminus E\right)$,
when it exists, is called the \emph{outer Minkowski content} of $E$.
\end{definition}

As the next proposition demonstrates, tubular neighbourhoods have nice regularity properties.

\begin{proposition}
\label{prop:tubular}
Any compact $n$-dimensional tubular neighbourhood $A$ has $(n-1)$-rectifiable topological boundary $\partial A$ and admits
the finite outer Minkowski content.
\end{proposition}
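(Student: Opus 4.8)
The plan is to express everything through the compact set $C$ and $r>0$ with $A=B(C,r)$ (since $C\subset A$ is closed and $A$ is compact, $C$ is compact), and to reduce the two assertions to classical facts about the volume function of parallel sets. First the elementary reductions. Since $\{d_C<r\}$ is a nonempty open subset of $A$, we have $\Int A\supseteq\{d_C<r\}$ and $\partial A=\partial B(C,r)\subseteq\{d_C=r\}$; and since $d_A=\max\{0,d_C-r\}$, we get $B(A,s)=B(C,r+s)$, hence
\[
\mathcal{H}^n\bigl(B(A,s)\setminus A\bigr)=\mathcal{L}^n\bigl(\{r<d_C\le r+s\}\bigr)=V(r+s)-V(r),\qquad V(t):=\mathcal{L}^n\bigl(B(C,t)\bigr).
\]
Thus $\mathcal{SM}^n(A)$ exists if and only if $V$ is right-differentiable at $t=r$, and then $\mathcal{SM}^n(A)=V'(r+)$. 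Finally, $A$ satisfies a uniform interior ball condition of radius $r$: for $x\in\partial A$, any $c\in C$ nearest to $x$ has $|x-c|=r$ and $\{y:|y-c|<r\}\subseteq\Int A$.

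For rectifiability I would use the interior ball condition. Near $x\in\partial A$ the boundary is trapped on one side of the round sphere bounding the interior ball $\{y:|y-c|<r\}$; this one-sided bound of $1/r$ on the curvature of $\partial A$ lets one cover a neighbourhood of $x$ in $\partial A$ by a Lipschitz image of a bounded subset of $\reali^{n-1}$ (a graph of a semiconcave function over the tangent hyperplane, together with, at singular points, a lower-dimensional cuspidal piece). A finite subcover of the compact set $\partial A$ exhibits it as a finite union of such images, so $\partial A$ is $(n-1)$-rectifiable with $\mathcal{H}^{n-1}(\partial A)<\infty$. This is the classical statement that the boundary of a parallel set is rectifiable with finite perimeter.

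It remains to show that $V'(r+)$ exists. The function $V$ is non-decreasing, and the coarea formula gives $V(t+h)-V(t)=\int_t^{t+h}\mathcal{H}^{n-1}(\{d_C=\tau\})\,d\tau$; since $\mathcal{H}^{n-1}$ of the level sets is bounded on compact subintervals of $(0,\infty)$, $V$ is locally Lipschitz there, so a one-sided derivative, once it exists, is automatically finite. For its existence I would foliate the annulus: for Lebesgue-a.e.\ $x$ with $r<d_C(x)\le r+s$ the point of $A$ nearest to $x$ is the single point $p(x)=c+r(x-c)/|x-c|\in\partial A$ (where $c$ is the a.e.\ unique nearest point of $C$ to $x$), and $x=p(x)+\rho\,\nu(p(x))$ with $\rho=d_C(x)-r\in(0,s]$. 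Thus $B(C,r+s)\setminus B(C,r)$ is, up to a null set, the normal tube of width $s$ over $\partial A$. The curvature bound $\le 1/r$ controls the Jacobian $J(p,\rho)$ of the normal map from above by $(1+\rho/r)^{n-1}$, and $J(p,\rho)\to1$ as $\rho\to0$; dominated convergence in the resulting integral representation of $V(r+s)-V(r)$ then gives $V'(r+)=\mathcal{H}^{n-1}(\partial A)$, whence $\mathcal{SM}^n(A)=\mathcal{H}^{n-1}(\partial A)<\infty$.

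The main obstacle is this last point --- obtaining the \emph{limit}, not merely the $\limsup$, defining $\mathcal{SM}^n(A)$. The bound $\limsup_{s\to0+}s^{-1}\mathcal{H}^n(B(A,s)\setminus A)\le\mathcal{H}^{n-1}(\partial A)$ is cheap, being immediate from the upper bound on $J$; but a continuous non-decreasing function of one real variable may fail to have a one-sided derivative at a prescribed point, so upgrading the $\limsup$ to a limit at the given radius $r$ genuinely uses structure. Making the foliation argument rigorous is delicate, since the normal field $\nu$ is defined only $\mathcal{H}^{n-1}$-a.e.\ on $\partial A$ and need not be continuous, so the area formula must be applied in its nonsmooth form; the cleanest route is to invoke the classical structure theorem for the volume function of parallel sets (Kneser, Stach\'o), according to which $V$ has finite one-sided derivatives at every $t>0$ with $V'(t+)=\mathcal{H}^{n-1}(\partial B(C,t))$, and to specialise it to $t=r$.
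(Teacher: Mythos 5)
The reduction $\mathcal{SM}^n(A)=\lim_{s\to 0+}\frac{1}{s}\bigl(V(r+s)-V(r)\bigr)$ with $V(t)=\mathcal{L}^n\bigl(B(C,t)\bigr)$ is correct, and your rectifiability claim is exactly the fact the paper imports from Rataj, so that half is fine at citation level. The genuine problem is the identity on which your second half rests: it is \emph{not} true that $V'(t+)=\mathcal{H}^{n-1}\bigl(\partial B(C,t)\bigr)$ at every $t>0$, and this is not what the Kneser--Stach\'o theorem says (Stach\'o gives local Lipschitzness of $V$ on $(0,\infty)$ and existence of one-sided derivatives at every radius, but not this identification, which in general holds only outside an exceptional set of radii). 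Counterexample in $\reali^2$: let $K\subset[0,1]$ be a fat Cantor set, $f(x)=\epsilon\, d_K(x)^2$ with $\epsilon$ small, and let $C$ be the union of the graphs of $1+f$ and $-1-f$; at $r=1$ the two halves of $A=B(C,1)$ touch exactly along $K\times\{0\}$, and each such point is a boundary point at which $A$ has density one (the complementary gap has thickness $O(|x-x_0|^2)$). These touching points contribute $o(s)$ to $\mathcal{H}^2\bigl(B(A,s)\setminus A\bigr)$ but contribute $\mathcal{L}^1(K)>0$ to $\mathcal{H}^1(\partial A)$, so $\mathcal{SM}^2(A)<\mathcal{H}^1(\partial A)$. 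Your foliation argument breaks precisely here: the outward normal rays sweep only those boundary points that are nearest points of exterior points, i.e.\ essentially $\partial^*A$, and density-one boundary points --- which a parallel body can carry on a set of positive $\mathcal{H}^{n-1}$-measure --- are never reached, so dominated convergence cannot produce $\mathcal{H}^{n-1}(\partial A)$ as the limit. This is exactly the ``$\limsup$ versus limit'' issue you flag yourself, and the citation you propose does not close it because the quoted statement is false as written; the valid general identity is $\mathcal{SM}^n(A)=P(A)=\mathcal{H}^{n-1}(\partial^* A)$.

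The statement you were asked to prove only asserts existence and finiteness, so your route is repairable: drop the identification with $\mathcal{H}^{n-1}(\partial A)$ and invoke Stach\'o's theorem in its correct form, which gives directly that $V'(r+)$ exists and is finite, hence $\mathcal{SM}^n(A)=V'(r+)<\infty$. The paper argues differently: after the rectifiability of $\partial A$ (Rataj), it applies the Ambrosio--Colesanti--Villa type formula $\mathcal{SM}^n(A)=P(A)+2\mathcal{H}^{n-1}\bigl(\partial A\cap A^0\bigr)$ and uses the interior ball condition to show that no boundary point has density zero, whence $\mathcal{SM}^n(A)=P(A)\leq\mathcal{H}^{n-1}(\partial A)<\infty$. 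That formula also explains why your proposed value is the wrong one: density-one boundary points are invisible to the outer Minkowski content, while $\mathcal{H}^{n-1}(\partial A)$ counts them.
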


If the initial set $K$ is an $n$-dimensional tubular neighbourhood, the funnel $\Omega^+(K)$ looks almost like an $(n+1)$-dimensional
tubular neighbourhood (see Appendix for details). In particular, it has similar regularity properties.

\begin{proposition}
\label{prop:rectifiable}
Let $K\subset\reali^n$ be a compact tubular neighbourhood. Then
\begin{enumerate}[\textnormal{(}a\textnormal{)}]
	\item $\Omega^+(K)$ has $n$-rectifiable boundary and $\mathcal{SM}^{n+1}\left(\Omega^+(K)\right)=\mathcal{H}^n\left(\partial\Omega^+(K)\right)<\infty$;
	\item $\Omega_t^+(K)$ is an $n$-dimensional tubular neighbourhood, for each $t\in [\tau_0,\tau]$.
\end{enumerate} 
\end{proposition}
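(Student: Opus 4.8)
The plan is to pass from trajectories of~\eqref{eq:inclusion} to flows of time-varying vector fields and then read off the geometry of $\Omega^+(K)$ from the regularity of tubular neighbourhoods. First I would observe that, by Carath\'eodory's theorem in $\reali^n$ together with a measurable selection (Filippov's lemma), every solution $x(\cdot)$ of~\eqref{eq:inclusion} with $x(\tau_0)\in K$ is of the form $x(t)=\phi^V_t\bigl(x(\tau_0)\bigr)$, where $\phi^V$ is the flow of a Carath\'eodory vector field $V(s,z)=\sum_{i=0}^{n}\lambda_i(s)\,g\bigl(s,z,u_i(s)\bigr)$ with measurable $\lambda_i\ge 0$, $\sum_i\lambda_i\equiv 1$ and $u_i(\cdot)\in U$. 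Since $V(s,z)\in F(s,z)$ for every $z$, each curve $s\mapsto\phi^V_s(z)$ is again a solution of~\eqref{eq:inclusion}, whence $\Omega^+_t(K)=\bigcup_V\phi^V_t(K)$, the union running over all such $V$. By $(\mathbf{g_2})$ every $V(s,\cdot)$ is $L_1$-Lipschitz on $\reali^n$, so each $\phi^V$ is globally defined on $[\tau_0,\tau]\times\reali^n$; by $(\mathbf{g_3})$ every $V(s,\cdot)$ is continuously differentiable with $|\partial_z V(s,\cdot)|\le L_1$ and with $L_2$-Lipschitz gradient, all uniformly in $s$ and in the control; finally $|V|$ is bounded by some $M$ on the compact region swept by the trajectories issuing from $K$.

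Next I would show that for every $t\in[\tau_0,\tau]$ the map $\phi^V_t$ is a diffeomorphism of $\reali^n$ of class $\C{1,1}$, with bounds independent of $V$ and $t$. Gronwall's inequality gives $\|D\phi^V_t\|,\|(D\phi^V_t)^{-1}\|\le e^{L_1(\tau-\tau_0)}=:\Lambda$; comparing the variational equation $\dot N=\partial_z V\bigl(s,\phi^V_s(z)\bigr)N$, $N(\tau_0)=I$, at two initial points $z_1,z_2$ and using the bound $\|\partial_z V(s,z_1)-\partial_z V(s,z_2)\|\le L_2|z_1-z_2|$ from $(\mathbf{g_3})$ together with Gronwall once more yields a uniform Lipschitz constant $\ell$ for $z\mapsto D\phi^V_t(z)$. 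The decisive point is that one controls $\phi^V_t$ in $\C{1,1}$ and not merely in the bi-Lipschitz sense: a bi-Lipschitz image of a ball may develop corners, whereas the image of a round ball $B(c,r)$ under a $\C{1,1}$-diffeomorphism with constants $\Lambda,\ell$ is a compact domain with $\C{1,1}$ boundary and is therefore a tubular neighbourhood, of radius bounded below by some $\rho_0=\rho_0(\Lambda,\ell,r)>0$. Since $K=\bigcup_{a\in A}B(a,r)$, applying $\phi^V_t$ to each of these round balls shows that $\phi^V_t(K)$ is a tubular neighbourhood of radius $\ge\rho_0$; taking unions over $a$ and over $V$, and using that $\Omega^+_t(K)$ is compact so that the set of centres admissible for it is closed, one concludes that $\Omega^+_t(K)$ is an $n$-dimensional tubular neighbourhood of radius $\ge\rho_0$ for every $t$. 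This is part~(b).

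For part~(a) I would combine part~(b) with Proposition~\ref{prop:tubular}: every slice boundary $\partial\Omega^+_t(K)$ is $(n-1)$-rectifiable, and since $\rho_0$ and the diameter of the swept region are uniform, $\mathcal{H}^{n-1}\bigl(\partial\Omega^+_t(K)\bigr)$ is bounded uniformly in $t$. The boundary of the funnel is then covered by the two horizontal faces $\{\tau_0\}\times K$ and $\{\tau\}\times\Omega^+_\tau(K)$ together with the lateral part $\bigcup_{t\in[\tau_0,\tau]}\bigl(\{t\}\times\partial\Omega^+_t(K)\bigr)$ — interior points of an interior slice being interior to $\Omega^+(K)$, by continuity of the reachable-set map in the Hausdorff metric and the uniform interior ball bound. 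The two faces are $n$-rectifiable, being bounded subsets of $n$-dimensional affine subspaces of $\reali^{n+1}$. For the lateral part I would invoke the structural description of $\Omega^+(K)$ carried out in the Appendix — away from its two faces it is an $(n+1)$-dimensional tubular neighbourhood — so that Proposition~\ref{prop:tubular}, now applied in $\reali^{n+1}$, makes this part $n$-rectifiable with finite $\mathcal{H}^n$ measure (the same description, together with the Lipschitz dependence $\Omega^+_{t'}(K)\subseteq B\bigl(\Omega^+_t(K),M|t-t'|\bigr)$, also gives the bound by hand). Hence $\mathcal{H}^n\bigl(\partial\Omega^+(K)\bigr)<\infty$. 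For the Minkowski-content identity I would split $B\bigl(\Omega^+(K),\delta\bigr)\setminus\Omega^+(K)$ into the collar of the $(n+1)$-dimensional tubular part, whose measure divided by $\delta$ tends to $\mathcal{H}^n$ of the lateral boundary by Proposition~\ref{prop:tubular}, and two thin layers adjacent to the faces, whose measures divided by $\delta$ tend to the $\mathcal{H}^n$-areas of the faces; adding the limits gives $\mathcal{SM}^{n+1}\bigl(\Omega^+(K)\bigr)=\mathcal{H}^n\bigl(\partial\Omega^+(K)\bigr)$.

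The main obstacle is not the slice-wise analysis — the flow argument yields part~(b), and with it per-slice rectifiability, rather directly — but the global geometry of $\Omega^+(K)$ inside $\reali^{n+1}$: matching the spatially smooth tubular slices across the non-smooth junctions at the top and bottom faces so as to obtain $n$-rectifiability of the whole boundary and the exact value of the outer Minkowski content. This is precisely the point deferred to the Appendix, and it is the heart of the argument. A secondary but genuinely necessary technical issue is keeping $\rho_0$ and the rectifiability bounds uniform over the infinite family of vector fields $V$ produced by the selection; this is where the uniform modulus $L_2$ in $(\mathbf{g_3})$ — rather than mere Lipschitz dependence of $g$ on $x$ — is indispensable.
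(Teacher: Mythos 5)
Your part~(b) and the rectifiability half of part~(a) follow a workable outline, though they largely re-derive material the paper imports: the statement that the image of a round ball under a flow map with uniform $\C{1,1}$ bounds is a tubular neighbourhood of radius bounded below is precisely the nontrivial content of the result the paper cites from~\cite{Lorenz05} (Theorem~2.1), and you assert it rather than prove it; and your treatment of the lateral boundary rests, as the paper's does, on the Appendix result (Theorem~\ref{thm:intball}) combined with Proposition~\ref{prop:tubular}, modulo the small but necessary compactification step (the set in Theorem~\ref{thm:intball} is unbounded, which is why the paper attaches the enlarged discs $B(D_0,r)$, $B(D_1,r)$ before applying Proposition~\ref{prop:tubular}).

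The genuine gap is in the identity $\mathcal{SM}^{n+1}\left(\Omega^+(K)\right)=\mathcal{H}^n\left(\partial\Omega^+(K)\right)$. Proposition~\ref{prop:tubular}, and Villa's formula behind it, control the collar by the \emph{perimeter}, i.e.\ by $\mathcal{H}^n$ of the \emph{essential} boundary, not of the topological boundary: a tubular neighbourhood may have topological boundary points of density $1$ (pinch points), which contribute to $\mathcal{H}^n(\partial\Omega^+(K))$ but not to the collar, and a priori the end faces could carry density-$0$ points (``whiskers''), which the collar would count twice. Hence your splitting of $B\left(\Omega^+(K),\delta\right)\setminus\Omega^+(K)$ into a lateral collar plus two face layers yields, at best, $P\left(\Omega^+(K)\right)$ plus the face areas (and even the asymptotics of the lateral collar is only the perimeter of the extended tubular set, not $\mathcal{H}^n$ of its topological boundary); to upgrade this to $\mathcal{H}^n(\partial\Omega^+(K))$ one must show that every topological boundary point of the funnel has density strictly between $0$ and $1$. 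This is exactly the paper's Lemma~\ref{lem:boundaries}, whose nontrivial ingredients are (i) the upper bound $\Theta^{n+1}<1$ at lateral boundary points, obtained from the Lipschitz-in-time inclusion $\Omega^+_s(K)\subseteq B\left(\Omega^+_t(K),L|s-t|\right)$, and (ii) the positive lower density bound at the faces $t=\tau_0,\tau$, obtained by flowing an $n$-dimensional ball contained in $K$ (resp.\ in $\Omega^+_\tau(K)$) along a fixed control and estimating the swept volume from below via the Reynolds transport theorem and $\div g\geq -nL_1$. Your proposal addresses neither density bound, and it also leaves unestimated the overlap of the three collar pieces near the edges $\{\tau_0\}\times\partial K$ and $\{\tau\}\times\partial\Omega^+_\tau(K)$, which must be shown to be $o(\delta)$; so the stated equality is unsupported as written.
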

It is worth to mention that slices of an arbitrary tubular neighbourhood are not necessarily tubular neighbourhoods, while the
slices of the funnel are. The proofs of Propositions~\ref{prop:tubular},~\ref{prop:rectifiable} are defered to Appendix.

Integral funnels may also be characterized in terms of proximal
normals.

\begin{definition}
\label{def:prox}
A vector $p\in\reali^n$ is a \emph{proximal normal} to a closed set $C\subset \mathbb{R}^n$ 
at a point $x\in C$ if there is $y\not\in C$ such that $|y-x| = d_C(y)$ and $p = \alpha(y - x)$ 
for some $\alpha> 0$. The set of all proximal normals to $C$ at $x$ is a cone denoted by $N^P_C(x)$.
\end{definition}

To state the next result, consider the map $H\colon[0,\infty)\times\reali^n\times\reali^n\to\reali$ defined by the rule
\[
H(t,x,p) = \max\left\{p\cdot v\;\colon\; v\in F(t,x)\right\}
\]
and called the \emph{(upper) Hamiltonian} associated to $F$.

\begin{proposition}[F. Clarke~\cite{ClarkeProx}]
\label{prop:proximal}
Let $K\subset\reali^n$ be compact. Then,
for every $(t,x)\in\Omega^+(K)$ with $\tau_0<t<\tau$, we have
\begin{equation}
\label{eq:hamilton}
\theta + H(t,x,\zeta) = 0
\qquad\text{for all}\qquad (\theta,\zeta)\in N_{\Omega^+(K)}^P(t,x).
\end{equation}
\end{proposition}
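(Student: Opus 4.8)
The plan is to deduce the identity \eqref{eq:hamilton} from the proximal‑normal characterizations of weak and strong invariance of closed sets under differential inclusions, following Clarke. First I would augment the non‑autonomous inclusion \eqref{eq:inclusion} to the autonomous one on $\reali^{n+1}$,
\[
\dot z(s)\in\mathcal{F}\bigl(z(s)\bigr),\qquad \mathcal{F}(t,x):=\{1\}\times F(t,x),\qquad z=(t,x),
\]
and check that $\mathcal{F}$ meets the hypotheses those characterizations need: by $(\mathbf{g_{1}})$ it is continuous with nonempty compact convex values; by $(\mathbf{g_{2}})$ it is Lipschitz in the Hausdorff metric with constant $L_1$ (passing to convex hulls does not increase the Hausdorff distance); and on the slab $[\tau_0,\tau]\times\reali^n$ it has linear growth, since $|g(t,x,u)|\le c_0+L_1|x|$ with $c_0:=\max\{|g(t,0,u)|\colon t\in[\tau_0,\tau],\,u\in U\}<\infty$. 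Its upper Hamiltonian is $\mathcal{H}\bigl((t,x),(\theta,\zeta)\bigr)=\theta+H(t,x,\zeta)$, and the lower Hamiltonian of the time‑reversed map $-\mathcal{F}$ at the same arguments equals $-\theta-H(t,x,\zeta)$.

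Fix $(t,x)\in\Omega^+(K)$ with $\tau_0<t<\tau$ and $(\theta,\zeta)\in N^P_{\Omega^+(K)}(t,x)$, and choose $\delta>0$ with $[t-\delta,t+\delta]\subset(\tau_0,\tau)$, so that every trajectory of $\pm\mathcal{F}$ issuing from $(t,x)$ on a time interval of length at most $\delta$ remains in the open slab $\{\tau_0<s<\tau\}$. On this slab $S:=\Omega^+(K)$ is \emph{forward strongly invariant} under $\mathcal{F}$: if $(t_0,x_0)\in S$ and $y(\cdot)$ solves \eqref{eq:inclusion} with $y(t_0)=x_0$, one concatenates the solution witnessing $(t_0,x_0)\in S$ on $[\tau_0,t_0]$ with $y$ and extends it to $[\tau_0,\tau]$ by any solution (which exists by the standing assumptions); the resulting graph lies in $S$, so $y$ stays in $S$. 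It is also \emph{backward weakly invariant}: reversing time in the solution witnessing $(t,x)\in S$ produces a trajectory of $-\mathcal{F}$ from $(t,x)$ that stays in $S$. Applying the strong‑invariance characterization to $\mathcal{F}$ gives $\mathcal{H}\bigl((t,x),(\theta,\zeta)\bigr)\le 0$, i.e. $\theta+H(t,x,\zeta)\le 0$; applying the weak‑invariance characterization to $-\mathcal{F}$ gives $-\theta-H(t,x,\zeta)\le 0$. Adding the two inequalities yields \eqref{eq:hamilton}.

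I expect the difficulty to be technical rather than conceptual. One point is to make the reduction to a \emph{local} invariance statement rigorous, since $\Omega^+(K)$ lives only on the bounded slab and is not invariant in all of $\reali^{n+1}$ (one localizes near $(t,x)$, using that proximal normal cones and the Hamiltonian conditions are local notions, or one modifies $\mathcal{F}$ outside a neighbourhood of $(t,x)$); this is exactly where the restriction $\tau_0<t<\tau$ enters, because at the end faces $t=\tau_0$ and $t=\tau$ only one of the two trajectories remains inside $S$, and one then obtains merely $\theta+H(t,x,\zeta)\le 0$, resp. $\ge 0$. The other point is that the bound $\theta+H(t,x,\zeta)\le 0$ genuinely requires \emph{strong} invariance, and hence the Lipschitz hypothesis $(\mathbf{g_{2}})$: weak invariance alone, available under mere continuity, would only give $\theta\le H(t,x,-\zeta)$, which is too weak. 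With these two points settled the proof is a direct invocation of the proximal invariance principles applied to $\mathcal{F}$ and to $-\mathcal{F}$.
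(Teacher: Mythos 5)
The paper does not actually prove this proposition: it is imported verbatim from Clarke (the citation \cite{ClarkeProx}), so there is no in-paper argument to compare against. Your reconstruction is correct and follows what is essentially the standard route behind Clarke's result: augment time, note that the upper Hamiltonian of $\mathcal{F}=\{1\}\times F$ at $(\theta,\zeta)$ is $\theta+H(t,x,\zeta)$, and obtain the inequality $\theta+H(t,x,\zeta)\le 0$ from forward strong invariance of the funnel (which needs the Lipschitz property of $F$, available from $(\mathbf{g_2})$ since taking convex hulls does not increase Hausdorff distance) and the reverse inequality from backward weak invariance along the trajectory witnessing $(t,x)\in\Omega^+(K)$; adding the two gives \eqref{eq:hamilton}, and the restriction $\tau_0<t<\tau$ enters exactly where you say, since at the end faces only one of the two one-sided trajectories is available. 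Two small remarks. First, the localization issue you flag is real but even milder than you suggest: the necessity direction of the proximal criteria is pointwise, so you never need invariance of $\Omega^+(K)$ as a whole --- for the upper bound it suffices that for each $v\in F(t,x)$ there is a trajectory through $(t,x)$ with initial velocity $v$ whose graph stays in the funnel for small forward times (take the solution of $\dot y(s)=\mathrm{proj}_{F(s,y(s))}(v)$, concatenated with the witnessing trajectory), and then the proximal normal inequality $\langle(\theta,\zeta),z-(t,x)\rangle\le\sigma|z-(t,x)|^2$ applied along this graph and divided by $s-t$ gives $\theta+\zeta\cdot v\le 0$. Second, for the backward part one should note that the witnessing trajectory need not be differentiable at $t$, so one passes to a subsequential limit of difference quotients, which lies in $F(t,x)$ by convexity, compactness and continuity of $F$; this yields some $w\in F(t,x)$ with $\theta+\zeta\cdot w\ge 0$ and hence $\theta+H(t,x,\zeta)\ge 0$. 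With these points made explicit your argument is a complete proof of the cited proposition.
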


Actually, under our assumptions, there are only two possibilities: the cone $N_{\Omega^+(K)}^P(t,x)$ is either zero or consists of a single ray. This fact
follows from Proposition~\ref{prop:rectifiable}.

\begin{remark}
\label{rem:backward}
Together with the forward integral funnel $\Omega^+(K)$ one may consider the 
\emph{backward integral funnel}:
\[
\Omega^-(K) = 
\left\{\left(t,x(t)\right)\;\colon\;t\in [\tau_0,\tau],\;x(\cdot)\;
\mbox{is a solution to}~\eqref{eq:inclusion},
\;x(\tau)\in K\right\}.
\]
It is easy to see that $x=x(t)$ satisfies~\eqref{eq:inclusion} if and only if $y=y(t)$,
defined by $y(t) = x(\tau+\tau_0-t)$, satisfies
\begin{equation}
\label{eq:backward_inclusion}
\dot y(t) \in \hat F\left(t,y(t)\right)\qquad \mbox{for a.e.}\;\; t\in [\tau_0,\tau],
\end{equation}
where
\[
\hat F(t,y) = -F\left(\tau+\tau_0-t,y\right).
\]
Moreover, $x(\tau)\in K$ is equivalent to $y(\tau_0)\in K$. Thus, denoting the forward funnel
of~\eqref{eq:backward_inclusion} by $\hat\Omega^+(K)$, we obtain
\[
\Omega^-(K) = \left\{(t,x)\;\colon\;t\in [\tau_0,\tau],\;\; (\tau+\tau_0-t,x)\in \hat{\Omega}^+(K)\right\}.
\]

The above identity implies that Proposition~\ref{prop:rectifiable} holds also for the backward funnel of~\eqref{eq:inclusion}. In order to rewrite 
Proposition~\ref{prop:proximal}, 
we notice that $(\theta,\zeta)$ is a proximal normal to $\Omega^-(K)$ at $(t,x)$ if and only if 
$(-\theta,\zeta)$ is a proximal normal to $\hat\Omega^+(K)$ at $(\tau+\tau_0-t,x)$. Moreover,
\begin{align*}
\hat H(\tau+\tau_0-t,x,p) 
&= \max\{\langle p,v\rangle\;\colon\;v\in \hat F(\tau+\tau_0-t,x)\}\\
&= \max\{\langle p,v\rangle\;\colon\;v\in -F(t,x)\}\\
&= H(t,x,-p).
\end{align*}
Therefore, equation~\eqref{eq:hamilton} must be substituted with
\begin{equation}
\label{eq:back_hamilton}
-\theta + H(t,x,-\zeta) = 0
\qquad\text{for all}\qquad (\theta,\zeta)\in N_{\Omega^-(K)}^P(t,x).
\end{equation}

\end{remark}

\section{The modified Kru\v{z}kov theorem}

In this section we prove the key result of our paper, a certain modification of
the classical Kru\v{z}kov theorem.

\paragraph{Assumptions.} Throughout the rest of the paper, the map $f\colon [0,\infty)\times\reali^n\times \reali\to\reali^n$ has the following properties:
\begin{enumerate}
\item[$(\mathbf{f_{1}})$] $f$ is continuously differentiable and its partial derivatives $u\mapsto\frac{\partial f}{\partial t}(t,x,u)$ and $u\mapsto\frac{\partial f}{\partial x}(t,x,u)$
are Lipschitz for all $t$ and $x$;
\item[$(\mathbf{f_{2}})$] $\frac{\partial f}{\partial u}$ satisfies assumptions $(\mathbf{g_{1}})$--$(\mathbf{g_{3}})$.
\end{enumerate}

\begin{definition}
A \emph{Kru\v{z}kov solution} of~\eqref{eq:genpde},~\eqref{eq:intcon} with $u_0 \in \L\infty(\reali^n)$ is a bounded measurable function
$u\colon [0,\infty)\times\reali^n\to \reali$ such that
$u\in \C0\left([0,\infty); \Lloc1(\reali^n)\right)$, $u(0,\cdot) = u_0$, and
\begin{multline}
\label{eq:admissible}
\iint
\bigg[\modulo{u(t,x)-k}\phi_t
 + \sgn\left(u(t,x)-k\right)
\cdot
\\
\sum_{\alpha=1}^m
\left(
\left[
f_{\alpha}\left(t,x,u(t,x)\right)-
f_{\alpha}(t,x,k)
\right]\phi_{x_{\alpha}}
-\partial_{x_{\alpha}}f_{\alpha}(t,x,k)\,\phi
\right)
\bigg]
\d t\d x 
\geq 0,
\end{multline}
for each $k\in \reali$ and every nonnegative Lipschitz test function $\phi=\phi(t,x)$
with compact support contained in the half-space $t>0$.
\end{definition}

Here, following~\cite{Dafermos}, we choose Lipschitz test functions instead
of smooth ones. The above definition is equivalent to the classical one given by~Kru\v{z}kov in~\cite{Kruzkov}. 
To see this, one may apply the same arguments as in the lemma below.

\begin{lemma}
\label{lem:sum}
Let $u$ and $\bar u$ be Kru\v{z}kov solutions of~\eqref{eq:genpde},~\eqref{eq:intcon} with initial data $u_0$ and $\bar u_0$, respectively.
Then
\begin{multline}
\label{eq:sum_v}
\iint
\bigg[
\left|u(t,x)-\bar u(t,x)\right|
\phi_t(t,x)
+\sgn\left(u(t,x)-\bar u(t,x)\right)\cdot\\
\sum_{\alpha=1}^n
\left[f_{\alpha}\left(t,x,u(t,x)\right)-f_{\alpha}\left(t,x,\bar u(t,x)\right)\right]
\phi_{x_{\alpha}}(t,x)
\bigg]
\d{t}\d{x}
\geq 0,
\end{multline}
for every nonnegative Lipschitz test function $\phi=\phi(t,x)$
with compact support contained in the half-space $t>0$.
\end{lemma}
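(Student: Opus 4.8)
The statement is the classical Kru\v{z}kov $\L1$-comparison inequality between two entropy solutions of the \emph{same} conservation law, and the natural way to obtain it is the \emph{doubling of the variables}. I would first apply~\eqref{eq:admissible} to $u$, viewed as a function of $(t,x)$, with constant $k=\bar u(s,y)$ and a test function $\phi=\phi(t,x,s,y)$ that is, for each fixed $(s,y)$, nonnegative, Lipschitz and compactly supported in $\{t>0\}$; then integrate the resulting inequality over $(s,y)$. Symmetrically, I would apply~\eqref{eq:admissible} to $\bar u$, viewed as a function of $(s,y)$, with $k=u(t,x)$ and the same $\phi$, and integrate over $(t,x)$. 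Adding the two inequalities and using $|a-b|=|b-a|$ and $\sgn(a-b)=-\sgn(b-a)$, everything assembles into one integral over $(t,x,s,y)$ in which: (i) the time derivatives appear only through $|u(t,x)-\bar u(s,y)|\,(\phi_t+\phi_s)$; (ii) the convective terms appear only through $\sgn(u(t,x)-\bar u(s,y))$ times $\sum_\alpha(f_\alpha(t,x,u(t,x))-f_\alpha(t,x,\bar u(s,y)))\,\phi_{x_\alpha}+\sum_\alpha(f_\alpha(s,y,u(t,x))-f_\alpha(s,y,\bar u(s,y)))\,\phi_{y_\alpha}$; and (iii) there remain the two ``source'' terms $-\sgn(u(t,x)-\bar u(s,y))\sum_\alpha\partial_{x_\alpha}f_\alpha(t,x,\bar u(s,y))\,\phi$ and $+\sgn(u(t,x)-\bar u(s,y))\sum_\alpha\partial_{y_\alpha}f_\alpha(s,y,u(t,x))\,\phi$, which do not cancel at this stage.

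Next I would specialize to $\phi(t,x,s,y)=\psi(\tfrac{t+s}{2},\tfrac{x+y}{2})\,\rho_h(t-s,x-y)$, where $\psi\ge0$ is the arbitrary Lipschitz test function to be tested in~\eqref{eq:sum_v} (supported in $\{t>0\}$) and $\rho_h$ is a standard mollifier on $\reali^{1+n}$; for $h$ small the support of $\phi$ stays in $\{t>0,\ s>0\}$, so $\phi$ is admissible in both applications. Since the $\rho_h$-factor depends only on the differences $t-s$ and $x-y$, we get $\phi_t+\phi_s=\psi_t\,\rho_h$ and $\phi_{x_\alpha}+\phi_{y_\alpha}=(\partial_{z_\alpha}\psi)\,\rho_h$ (with $z$ the spatial argument of $\psi$), so the large derivatives of $\rho_h$ drop out of these \emph{sums}. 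They do not drop out of the convective term in (ii): there the two fluxes are evaluated at the \emph{distinct} points $(t,x)$ and $(s,y)$, so the $\pm\psi\,\partial_\alpha\rho_h$ parts of $\phi_{x_\alpha}$ and $\phi_{y_\alpha}$ assemble into $\psi\,\partial_\alpha\rho_h\,(f_\alpha(t,x,v)-f_\alpha(s,y,v))$; since $f_\alpha(t,x,v)-f_\alpha(s,y,v)=O(|t-s|+|x-y|)=O(h)$ by the regularity of $f$ in $(t,x)$ assumed in $(\mathbf{f_1})$, whereas $\partial_\alpha\rho_h$ integrates to $O(h^{-1})$, this term is $O(1)$ and cannot be neglected.

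Finally I would pass to the limit $h\to0$. Using that almost every point of $\reali^{1+n}$ is a common Lebesgue point of the bounded functions $u,\bar u$ — with the time direction, if preferred, handled first by mollifying in $t$ alone and invoking $u,\bar u\in\C0([0,\infty);\Lloc1(\reali^n))$ — the doubled integral concentrates on the diagonal $\{(t,x)=(s,y)\}$: the terms in (i) and the ``diagonal'' part of (ii) converge to the integrand of~\eqref{eq:sum_v}, while a single integration by parts against $\rho_h$ together with a first-order Taylor expansion of $v\mapsto f_\alpha(t,x,v)-f_\alpha(s,y,v)$ in $(t-s,x-y)$ shows that the $O(1)$ remainder found above converges to $-\sgn(u-\bar u)\sum_\alpha(\partial_{x_\alpha}f_\alpha(t,x,u)-\partial_{x_\alpha}f_\alpha(t,x,\bar u))\,\psi$ — exactly the negative of the limit of the two surviving source terms in (iii). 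Hence everything except the integrand of~\eqref{eq:sum_v} cancels, and the inequality follows. This last cancellation is the only delicate point, and the sole place where $(\mathbf{f_1})$ and the fact that $u$ and $\bar u$ solve equations with the same flux $f$ are genuinely used; the remaining passages are routine mollifier bookkeeping (uniform $\L1$ bounds on the integrands, then dominated convergence).
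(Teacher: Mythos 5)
Your argument is correct in substance, but it takes a genuinely different route from the paper. You re-derive Kru\v{z}kov's comparison inequality from scratch by doubling the variables: apply \eqref{eq:admissible} to $u$ with $k=\bar u(s,y)$ and to $\bar u$ with $k=u(t,x)$, add, choose $\phi(t,x,s,y)=\psi\left(\tfrac{t+s}{2},\tfrac{x+y}{2}\right)\rho_h(t-s,x-y)$, and check that in the limit $h\to 0$ the $O(1)$ remainder coming from $\psi\,\partial_\alpha\rho_h$ times the flux differences (which, more precisely, appear as $\left[f_\alpha(t,x,u(t,x))-f_\alpha(s,y,u(t,x))\right]-\left[f_\alpha(t,x,\bar u(s,y))-f_\alpha(s,y,\bar u(s,y))\right]$, i.e.\ with both $v=u(t,x)$ and $v=\bar u(s,y)$) converges to $-\sgn(u-\bar u)\sum_\alpha\left(\partial_{x_\alpha}f_\alpha(t,x,u)-\partial_{x_\alpha}f_\alpha(t,x,\bar u)\right)\psi$ and cancels the two source terms; this is exactly Kru\v{z}kov's computation for $x$-dependent flux, and your limit is the right one. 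The paper does none of this: it simply cites Kru\v{z}kov's formula (3.12), which is \eqref{eq:sum_v} for \emph{smooth} test functions, and the entire content of its proof is the reduction from Lipschitz to smooth $\phi$ --- mollify, $\phi_\epsilon=\eta_\epsilon*\phi$, use that $\partial_t\phi_\epsilon,\partial_{x_\alpha}\phi_\epsilon$ converge in $\L1$ to the weak derivatives of $\phi$, which coincide a.e.\ with the classical ones. What your route buys is self-containedness, and it works directly with Lipschitz $\psi$ because Definition of the Kru\v{z}kov solution in this paper already admits Lipschitz test functions in \eqref{eq:admissible}; the price is that the ``routine mollifier bookkeeping'' you defer is where the real work sits (Lebesgue-point/translation-continuity arguments for the diagonal concentration, now with $\psi_t,\nabla\psi$ only bounded and defined a.e., and uniform continuity of $\nabla_{(t,x)}f$ on compacts for the Taylor remainder). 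The paper's route buys a two-line proof at the cost of delegating the analytic core to the cited result. Either way the lemma stands; if you keep your version, spell out the limit passages you labelled routine, since they are the only place the argument could leak.
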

\begin{proof}
Kru\v{z}kov proved that~\eqref{eq:sum_v} holds when $\phi$ is \emph{smooth}
(see~\cite[formula (3.12)]{Kruzkov}). Now, assuming that $\phi$ is Lipschitz, we
approximate it by smooth functions.

Let $\eta_{\epsilon}$ be the standard mollifier. Then $\phi_{\epsilon}=\eta_{\epsilon}*\phi$ is a smooth nonnegative
function compactly supported in the half-space $t>0$ for every small $\epsilon$. Moreover, by~\cite[Theorem 4.1]{EvansGariepy},
\begin{equation}
\label{eq:lemsum}
\partial_t\phi_{\epsilon}\to \partial_t\phi \quad\text{and}\quad \partial_{x_\alpha}\phi_{\epsilon}\to \partial_{x_\alpha}\phi
\quad\text{in}\quad\L1(\reali^{n+1}),
\end{equation}
where $\partial_t\phi$ and $\partial_{x_\alpha}\phi$ are the \emph{weak} partial derivatives of $\phi$. 

Any compactly supported Lipschitz function $\phi$ belongs to $\W1p(\reali^{n+1})$ for some $n+1<p<\infty$. Hence
the weak partial derivatives ($\partial_t\phi$ and $\partial_{x_\alpha}\phi$)
coincide with the classical partial derivatives
($\phi_t$ and $\phi_{x_\alpha}$) almost everywhere on $\reali^{n+1}$~\cite[Corollary~11.36]{Leoni}.  
The lemma now follows from the Kru\v{z}kov's result and~\eqref{eq:lemsum}.
\end{proof}



\begin{theorem}
\label{thm:main} 
Suppose that $(\mathbf{f_{1}})$, $(\mathbf{f_{2}})$ hold.
Let $u$ and $\bar u$ be Kru\v{z}kov solutions of~\eqref{eq:genpde},~\eqref{eq:intcon} with initial values $u_0$ and $\bar u_0$, respectively.
Then, for every compact tubular neighbourhood $K\subset\reali^n$ and all $\tau_0,\tau\in [0,\infty)$ such that $\tau_0\leq\tau$, we have
\begin{equation}
\label{eq:main}
\int_{K}\modulo{u(\tau,x)-\bar u(\tau,x)}\d x
\leq
\int_{\Omega^-_{\tau_0}(K)}\modulo{u(\tau_0,x)-\bar u(\tau_0,x)}\d x.
\end{equation}
Here $\Omega^-(K)$ is the backward integral funnel of differential inclusion~\eqref{eq:inclusion} with the right-hand side given by
\begin{equation}
\label{eq:F2}
F(t,x) = \co \partial_u f\left(t,x,[a,b]\right),
\end{equation}
where $a$ and $b$ are lower and upper bounds for solutions on $[\tau _0,\tau]$:
\[
    a \leq u(t,x) \leq b, \quad a \leq \bar u(t,x) \leq b \quad\text{for almost all}\quad 
    t\in [\tau_0,\tau],\;\; x\in\reali^n,
\]
\end{theorem}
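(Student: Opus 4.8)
The plan is to adapt Kružkov's classical doubling-of-variables argument, replacing the light cone $\{|x-x_0|\le c(t_0-t)\}$ by the backward funnel $\Omega^-(K)$. Start from the inequality of Lemma~\ref{lem:sum}: for every nonnegative Lipschitz test function $\phi$ supported in $\{t>0\}$,
\[
\iint\Big[|u-\bar u|\,\phi_t+\sgn(u-\bar u)\sum_{\alpha}\big(f_\alpha(t,x,u)-f_\alpha(t,x,\bar u)\big)\phi_{x_\alpha}\Big]\d t\d x\ge 0.
\]
Writing $f_\alpha(t,x,u)-f_\alpha(t,x,\bar u)=\big(\int_0^1\partial_u f_\alpha(t,x,\bar u+s(u-\bar u))\,\d s\big)(u-\bar u)$ and noting that the averaged vector belongs to $F(t,x)=\co\,\partial_u f(t,x,[a,b])$ (since $\bar u+s(u-\bar u)\in[a,b]$ a.e.), the flux term is pointwise bounded by $|u-\bar u|\,H\big(t,x,\nabla_x\phi\big)$ when $\nabla_x\phi$ points "outward"; more precisely $\sgn(u-\bar u)(f(t,x,u)-f(t,x,\bar u))\cdot\nabla_x\phi\le |u-\bar u|\,H(t,x,\nabla_x\phi)$ whenever the sign works out, and in general is $\le |u-\bar u|\max\{H(t,x,\nabla_x\phi),H(t,x,-\nabla_x\phi)\}$. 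Hence the integrand is dominated by $|u-\bar u|\big(\phi_t+H(t,x,\nabla_x\phi)\big)$ along the parts of $\phi$ that matter, so it suffices to choose $\phi$ so that $\phi_t+H(t,x,\nabla_x\phi)\le 0$ wherever $\nabla\phi\ne0$, while $\phi$ transitions from $1$ on (a slight shrink of) $\Omega^-(K)$ to $0$ outside a slight enlargement.

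The construction of $\phi$ is where the funnel regularity enters, and I expect it to be the main obstacle. The natural candidate is $\phi(t,x)=\psi\big(d_{\Omega^-(K)}(t,x)\big)$ (or a smoothing thereof) with $\psi$ a nonincreasing Lipschitz cutoff equal to $1$ near $0$ and $0$ beyond some small $\delta$. On the set where $0<d_{\Omega^-(K)}(t,x)<\delta$, the gradient $\nabla d_{\Omega^-(K)}(t,x)$ exists a.e.\ and is, up to normalization, the unique proximal normal direction at the nearest point of $\partial\Omega^-(K)$; by Remark~\ref{rem:backward}'s version of Proposition~\ref{prop:proximal}, that proximal normal $(\theta,\zeta)$ satisfies $-\theta+H(t,x,-\zeta)=0$. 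One must check the sign bookkeeping so that the combination $\phi_t+\max\{H(t,x,\nabla_x\phi),H(t,x,-\nabla_x\phi)\}$ is $\le 0$: since $\psi'\le 0$, $\nabla\phi=\psi'(d)\,\nabla d=\psi'(d)\,(\theta,\zeta)/|(\theta,\zeta)|$, and using positive $1$-homogeneity of $H$ in $p$ together with the relation at the boundary point, the dangerous term collapses to (a nonpositive multiple of) $-\theta+H(t,x,-\zeta)=0$. The transversality/foliation structure needed to make "$\nabla d$ equals the boundary proximal normal at the foot point, and the Hamilton relation transports to the interior of the collar" rigorous is exactly what Proposition~\ref{prop:rectifiable} (boundary $n$-rectifiable, finite outer Minkowski content, slices are tubular neighbourhoods) buys us; in particular $\mathcal H^{n}(\partial\Omega^-(K))<\infty$ controls the error terms in the collar as $\delta\to0$.

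With such a $\phi$ in hand, I would run the standard limiting procedure: fix $0<\tau_0\le\tau$, take $\phi_{\epsilon}(t,x)=\chi_\epsilon(t)\,\phi(t,x)$ where $\chi_\epsilon$ is a Lipschitz approximation of $\caratt{[\tau_0,\tau]}$ (with $\chi_\epsilon'$ a spike of mass $+1$ near $\tau_0$ and $-1$ near $\tau$), plug into Lemma~\ref{lem:sum}, and discard the spatial-gradient term because it is $\le 0$ by the construction above. What survives is
\[
\int_{\tau}^{\tau+\epsilon}\!\!\int |u-\bar u|\,(-\tfrac1\epsilon)\phi\,\d x\,\d t + \int_{\tau_0-\epsilon}^{\tau_0}\!\!\int |u-\bar u|\,(\tfrac1\epsilon)\phi\,\d x\,\d t + (\text{collar error})\ge 0.
\]
Using $u,\bar u\in\C0([0,\infty);\Lloc1)$ to pass $\epsilon\to0$, then letting the collar width $\delta\to0$ so that $\phi\uparrow\caratt{\Omega^-(K)}$ and the error $\to 0$ (here finiteness of the outer Minkowski content is used once more), and finally recalling $\Omega^-_{\tau}(K)=K$ and that the $\tau_0$-slice of $\Omega^-(K)$ is $\Omega^-_{\tau_0}(K)$, yields exactly~\eqref{eq:main}. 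The edge case $\tau_0=0$ follows by applying the result on $[\epsilon,\tau]$ and using continuity into $\Lloc1$ at $t=0$; the case $\tau_0=\tau$ is trivial.
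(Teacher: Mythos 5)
Your strategy is essentially the paper's own: a Lipschitz cutoff built from the distance to the backward funnel, the flux term bounded through the support function $H$, the proximal-normal identity \eqref{eq:back_hamilton} on the lateral boundary, funnel regularity to control the collar, and $u,\bar u\in\C0\left([0,\infty);\Lloc1(\reali^n)\right)$ to pass to the limit; the only cosmetic difference is your product with the time cutoff $\chi_\epsilon$, so the two boundary integrals come from the time spikes rather than from the parts of the collar lying below $\{t=\tau_0\}$ and above $\{t=\tau\}$, which is how the paper produces them. One wobble: the hedge with $\max\left\{H(t,x,\nabla_x\phi),H(t,x,-\nabla_x\phi)\right\}$ is both unnecessary and unattainable --- with $\nabla\phi=\psi'(d)(\theta,\zeta)$ this max equals $\modulo{\psi'}\max\{H(-\zeta),H(\zeta)\}$, which can strictly exceed $\modulo{\psi'}H(-\zeta)$, so the requirement $\phi_t+\max\{\cdot\}\leq 0$ may fail even where \eqref{eq:back_hamilton} holds. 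What you actually need, and what holds unconditionally, is $\sgn(u-\bar u)\left(f(t,x,u)-f(t,x,\bar u)\right)\cdot p\leq\modulo{u-\bar u}\,H(t,x,p)$, because the averaged vector $\int_0^1\partial_u f\left(t,x,\bar u+s(u-\bar u)\right)\d s$ lies in $F(t,x)$.

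The genuine gap is in the collar bookkeeping. First, to know that every collar point with $\tau_0<t<\tau$ has its foot point on the lateral boundary or on the edges, you need the paper's argument for \eqref{eq:projections}: a point whose projection falls in the interior of the bottom (resp.\ top) face necessarily has $t<\tau_0$ (resp.\ $t>\tau$). This is proved by a Lipschitz-trajectory estimate and is not automatic; without it, a hypothetical point with $t>\tau_0$ projecting vertically onto the bottom face would contribute a positive integrand of size $1/\delta$ that nothing in your construction controls. Second, and more seriously, at collar points whose foot lies on the edge sets $E_{\tau_0}=\{\tau_0\}\times\partial\Omega^-_{\tau_0}(K)$ and $E_{\tau}=\{\tau\}\times\partial\Omega^-_{\tau}(K)$, Proposition~\ref{prop:proximal} (via Remark~\ref{rem:backward}) gives no relation at all, since it requires $\tau_0<\bar t<\tau$; there the integrand is only bounded by $C/\delta$. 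The estimate you invoke --- finiteness of $\mathcal{H}^{n}\left(\partial\Omega^-(K)\right)$, i.e.\ collar volume $O(\delta)$ --- then yields only an $O(1)$ contribution from these corner regions, which does not vanish as $\delta\to0$, so the argument as written does not close. What is needed is the sharper fact that $E_{\tau_0}$ and $E_{\tau}$ are $(n-1)$-rectifiable (Proposition~\ref{prop:rectifiable}(b) combined with Proposition~\ref{prop:tubular}), hence $\mathcal{H}^{n+1}\left(B(E_{\tau_0},\delta)\right)=o(\delta)$ and $\mathcal{H}^{n+1}\left(B(E_{\tau},\delta)\right)=o(\delta)$ by Lemma~\ref{lem:content}; this is exactly step 6 of the paper's proof. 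With these two additions your outline coincides with the published argument.
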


\begin{proof}
To simplify the notation, we write $\Omega$ and $\Omega_t$ instead of $\Omega^-(K)$ and $\Omega^{-}_t(K)$.

\textbf{1.} First, we must construct an appropriate Lipschitz approximation of the characteristic function
$\mathbf{1}_{\Omega}$. Let us take the following one:
\[
\phi(t,x)=
\begin{cases}
1, & d_{\Omega}(t,x)=0,\\
1-\frac{1}{\epsilon}d_{\Omega}(t,x),& d_{\Omega}(t,x)< \epsilon,\\
0 & d_{\Omega}(t,x)\geq \epsilon,
\end{cases}
\]
where $d_{\Omega}(t,x)$ is the distance from $(t,x)$ to $\Omega$. 

\textbf{2.} Now we split the funnel's boundary $\partial\Omega$ into 3 parts:
\[
(\partial\Omega)^{-} = \partial\Omega\cap\{t=\tau_0\},\quad
(\partial\Omega)^{+} = \partial\Omega\cap\{t=\tau\},\quad
(\partial\Omega)^{s} = \partial\Omega\cap\{\tau_0<t<\tau\},
\]
then consider the set
\[
\Xi_{\epsilon} = \left\{(t,x)\;\colon\;0<d_{\Omega}(t,x)<\epsilon\right\}
\]
and split it into 5 parts:
\begin{align*}
\Xi_{\epsilon}^{\tau_0} &= \Xi_{\epsilon}\cap B\left((\tau_0,\partial\Omega_{\tau_0}),\epsilon\right),\\
\Xi_{\epsilon}^{\tau} &= \Xi_{\epsilon}\cap B\left((\tau,\partial\Omega_{\tau}),\epsilon\right),\\
\Xi_{\epsilon}^- &= \left\{(t,x)\in \Xi_{\epsilon}\;\colon\;\pi_\Omega(t,x)\subset (\partial\Omega)^-\right\}\setminus \Xi_{\epsilon}^{\tau_0},\\
\Xi_{\epsilon}^+ &= \left\{(t,x)\in \Xi_{\epsilon}\;\colon\;\pi_\Omega(t,x)\subset (\partial\Omega)^+\right\}\setminus \Xi_{\epsilon}^{\tau},\\
\Xi_{\epsilon}^s &= \left\{(t,x)\in \Xi_{\epsilon}\;\colon\;\pi_\Omega(t,x)\subset (\partial\Omega)^s\right\}\setminus (\Xi_{\epsilon}^{\tau_0}\cap \Xi_{\epsilon}^{\tau}).
\end{align*}
Here $\pi_{\Omega}(t,x)$ denotes the set of points $(s,y)$ satisfying 
$d_{\Omega}(t,x)=\sqrt{(t-s)^2+|x-y|^2}$; in other words, each point 
$(s,y)\in \pi_{\Omega}(t,x)$ is a projection of $(t,x)$ onto $\Omega$.

\begin{figure}[!ht]%
  \centering
\centering
\includegraphics[width=0.47\textwidth]{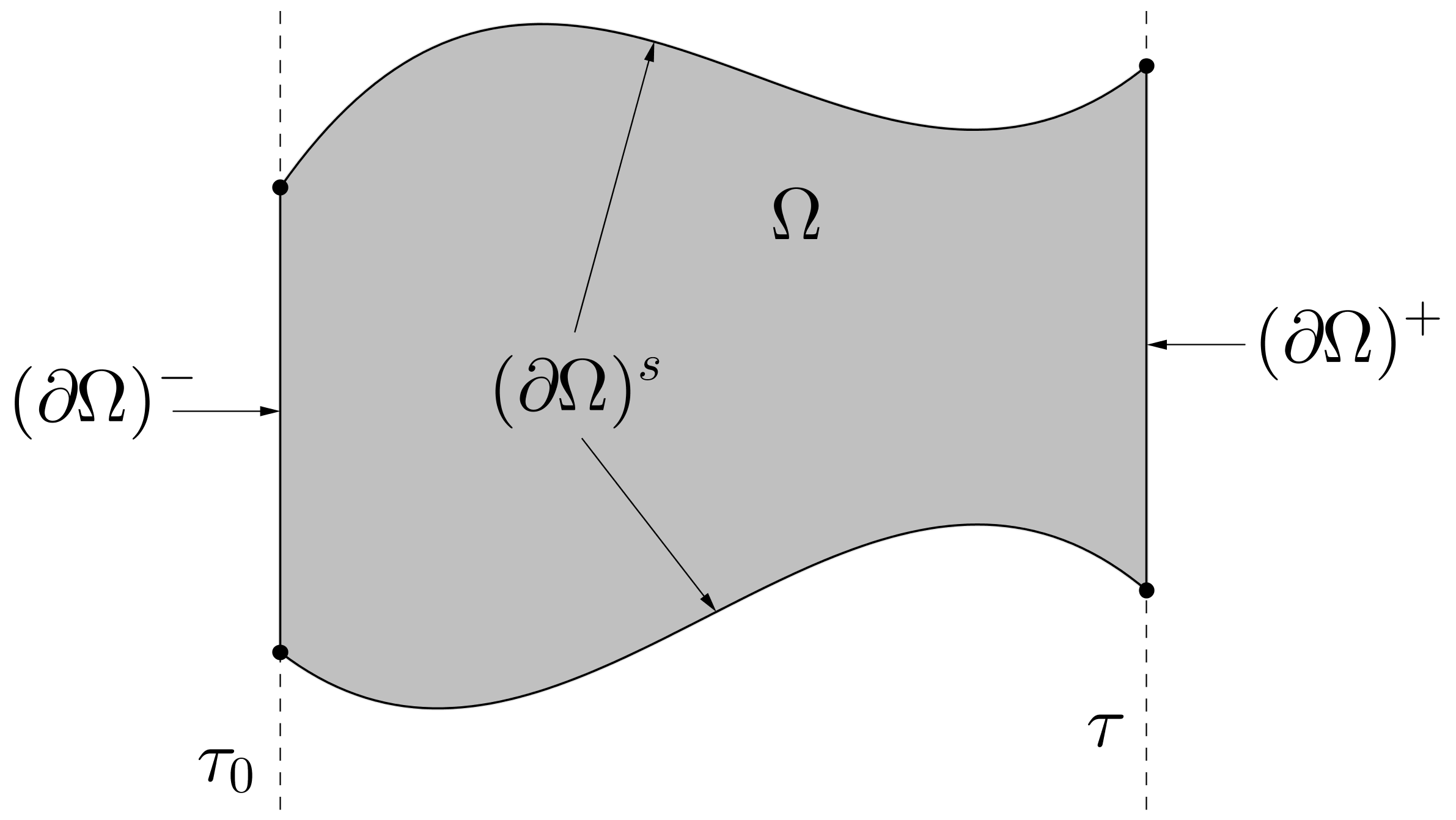}\quad
\includegraphics[width=0.47\textwidth]{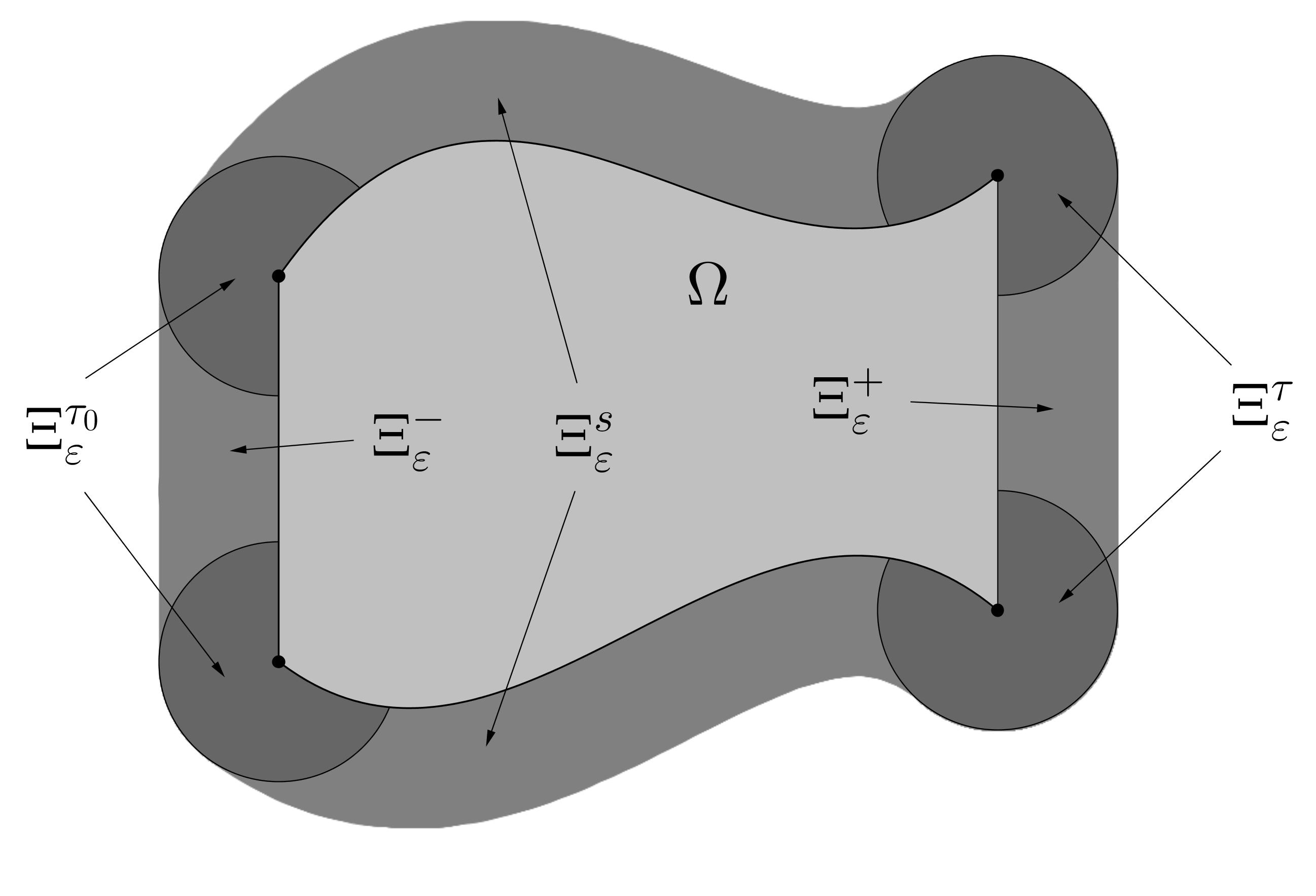}
\caption{\label{fig:funnel} The set $\Omega +rB$.}
\end{figure}

We state that, for all sufficiently small $\epsilon$, the following implications hold:
\begin{equation}
\label{eq:projections}
\begin{array}{l}
(t,x)\in \Xi_{\epsilon}^-\quad\Rightarrow\quad t<\tau_0\;\;\mbox{and}\;\;\pi_{\Omega}(t,x)=(\tau_0,x),\\
(t,x)\in \Xi_{\epsilon}^+\quad\Rightarrow\quad t>\tau\;\;\mbox{and}\;\;\pi_{\Omega}(t,x)=(\tau,x).
\end{array}
\end{equation} 
It is enough to check the first implication. 
Let $(t,x)\in \Xi_{\epsilon}^-$. Since $(t,x)\not\in\Xi_{\epsilon}^{\tau_0}$,
we conclude that $x\in\Int\Omega_{\tau_0}$ and $\pi_{\Omega}(t,x) = (\tau_0, x)$. Thus, the open $(n+1)$-dimensional ball of radius 
$(t- \tau_0)$ around $(t,x)$ does not contain points of $\Omega$. More specifically, if $y(\cdot)$ satisfies~\eqref{eq:maindi} and $y(\tau_0)=x$,
then, for all $s\in[\tau_0, \tau]$, it must be
\[
(t-s)^2 + |x-y(s)|^2 \geq (t -\tau_0)^2.
\]
Suppose that $t>\tau_0$. It follows from $(\mathbf{f_{1}})$ and $(\mathbf{f_{2}})$ that $y(\cdot)$ is Lipschitz. Hence, for some $C>0$ and all $s$, we have
\[
C^2(s - \tau_0)^2\geq |y(\tau_0)-y(s)|^2 = |x-y(s)|^2 \geq (t -\tau_0)^2 - (t-s)^2.
\]
After performing an easy computation, we obtain
\[
C^2\geq 1 + 2\frac{t-s}{s- \tau_0}.
\]
But the fraction on the right-hand side tends to $+\infty$ as $s\to \tau_0$. This gives a contradiction.




\textbf{3.} Assume that $\tau_0>0$. Then, for all sufficiently small $\epsilon$, the support of $\phi$
is contained in the half-plane $t>0$.
Insert $\phi$ into~\eqref{eq:sum_v}. By Propositions~\ref{prop:tubular},~\ref{prop:rectifiable} and Remark~\ref{rem:backward}, the boundary of $\Omega$ is regular enough
for assuring the equality
\[
\mathcal{H}^{n+1}(\partial\, \Xi_{\epsilon})=0.
\]
Hence, after we put $\phi$ into~\eqref{eq:sum_v}, the boundary points of $\Xi_{\epsilon}$ can be neglected. For all interior points of the sets 
$\Omega$ and $\left(\Xi_{\epsilon}\cup \Omega\right)^c$, we have $\nabla \phi=0$.
Thus, only the integral over $\Xi_{\epsilon}$ remains in~\eqref{eq:sum_v}.

\textbf{4.} Fix a point $(t,x)\in \Xi_{\epsilon}$, where $d_\Omega$ is differentiable.
In this case $(t,x)$ has a unique projection $(\bar t,\bar x)$ on $\Omega$ 
and $\nabla d_{\Omega}(t,x)=(\theta,\zeta)$ is a unit proximal normal to $\Omega$ at $(\bar t,\bar x)$.
Now the integrand in~\eqref{eq:sum_v} takes the form $-\frac{1}{\epsilon}g(t,x)$, where
\begin{equation*}
g(t,x) = 
\left|u(t,x)-\bar u(t,x)\right|
\theta
+
\sgn\left(u(t,x)-\bar u(t,x)\right)
\left[f\left(t,x,u(t,x)\right)-f\left(t,x,\bar u(t,x)\right)\right]\cdot
\zeta.
\end{equation*}
Using the obvious identity
\[
f\left(t,x,u(t,x)\right)-f\left(t,x,\bar u(t,x)\right) \\= 
\left(u(t,x)-\bar u(t,x)\right)\int_0^1\partial_u f\left(t,x,su(t,x)+(1-s)\bar u(t,x)\right)\d s,
\]
we get
\[
g(t,x) = 
\left|u(t,x)-\bar u(t,x)\right|
\left(
\theta + \int_0^1\partial_u f\left(t,x,su(t,x)+(1-s)\bar u(t,x)\right)\cdot\zeta\d s
\right).
\]
It follows from the Lipschitz continuity of $\partial_uf$ that
\[
\partial_uf\left(t,x,w\right)\cdot\zeta
\geq
\partial_uf\left(\bar t,\bar x,w\right)\cdot\zeta
-2L_1 \epsilon |\zeta|,\qquad w\in [a,b].
\]
Note that
\[
H(\bar t,\bar x,\zeta) 
= \max\left\{v\cdot \zeta\;\colon\; v\in F(\bar t,\bar x) \right\} 
= \max\left\{\partial_u f(\bar t,\bar x,w)\cdot \zeta\;\colon\; w\in [a,b] \right\}.
\]
Therefore, 
\[
\partial_uf\left(\bar t,\bar x,w\right)\cdot\zeta
\geq - H(\bar t, \bar x,-\zeta), \qquad  w\in [a,b],
\]
implying
\begin{equation}
\label{eq:important}
g(t,x)\geq \left|u(t,x)-\bar u(t,x)\right|\left(\theta-H(\bar t,\bar x,-\zeta)-2L_1 \epsilon |\zeta|\right).
\end{equation}

\textbf{5.} Let $(t,x)\in \Xi_{\epsilon}^s$. 
By the definition of $\Xi_{\epsilon}^s$, we have $(\bar t,\bar x)\in (\partial\Omega)^s$, so~\eqref{eq:back_hamilton} yields 
\[
\theta-H(\bar t,\bar x,-\zeta)=0
\qquad\text{for all}\qquad (\theta,\zeta)\in N_{\Omega^-(K)}^P(\bar t,\bar x).
\]
Now, it follows from~\eqref{eq:important} that
\[
-\iint_{\Xi_{\epsilon}^s} g(t,x)\d t\d x \leq 2L_1 \epsilon (b-a)\mathcal{H}^{n+1}(\Xi_{\epsilon}),
\]
because all non-differentiability points $(t,x)$ of $d_\Omega$ can be neglected by Rademacher's theorem.

If $(t,x)\in \Xi_{\epsilon}^+$, we deduce from~\eqref{eq:projections} that
$\theta=1$, $\zeta = 0$. Therefore,~\eqref{eq:important} gives
\[
-\iint_{\Xi_{\epsilon}^+} g(t,x)\d t \d x \leq -\iint_{\Xi_{\epsilon}^+} \modulo{u(t,x)-\bar u(t,x)}\d t \d x.
\]

Similarly, for $(t,x)\in \Xi_{\epsilon}^-$, we have
$\theta=-1$, $\zeta = 0$, which implies 
\[
-\iint_{\Xi_{\epsilon}^-} g(t,x)\d t \d x \leq \iint_{\Xi_{\epsilon}^-} \modulo{u(t,x)-\bar u(t,x)}\d t \d x.
\]

\textbf{6.} As for the rest part of $\Xi_{\epsilon}$, i.e.,
$\Xi_{\epsilon}^{\tau_0}\cup\Xi_{\epsilon}^{\tau}$, let us note that
it is contained in the set $B(E_{\tau_0},\epsilon)\cup B(E_{\tau},\epsilon)$, where
$E_{\tau_0} = \{\tau_0\}\times\partial \Omega_{\tau_0}$ and $E_{\tau} = \{\tau\}\times\partial \Omega_{\tau}$.
Therefore, 
\[
-\iint_{\Xi_{\epsilon}^{\tau_0}\cup\Xi_{\epsilon}^{\tau}} g(t,x)\d t\d x \leq 
M\left[
\mathcal{H}^{n+1}\left(B(E_{\tau_0},\epsilon)\right)
+
\mathcal{H}^{n+1}\left(B(E_{\tau},\epsilon)\right)
\right]
\]
for some $M>0$. Using Proposition~\ref{prop:rectifiable}(a) and Remark~\ref{rem:backward}, we conclude that
\[
\mathcal{H}^{n+1}\left(\Xi_{\epsilon}\right)=O(\epsilon).
\]
By Proposition~\ref{prop:rectifiable}(b) and Proposition~\ref{prop:tubular}, the sets $E_{\tau_0}$ and $E_\tau$ are $(n-1)$-rectifiable,
hence they have zero $(n+1)$-dimensional outer Minkowski content (see Lemma~\ref{lem:content}). In other words,	 
\[
\mathcal{H}^{n+1}\left(B(E_{\tau_0},\epsilon)\right)= o(\epsilon),
\quad
\mathcal{H}^{n+1}\left(B(E_{\tau},\epsilon)\right)=o(\epsilon).
\]

Combining all the previous inequalities, we obtain
\[
0\leq \frac{1}{\epsilon}\iint_{\Xi_{\epsilon}^-} \modulo{u(t,x)-\bar u(t,x)}\d t \d x
-\frac{1}{\epsilon}\iint_{\Xi_{\epsilon}^+} \modulo{u(t,x)-\bar u(t,x)}\d t \d x
+O(\epsilon).
\]
Passing to the limit as $\epsilon\to 0$ gives
\[
\lim_{\epsilon\to 0}\frac{1}{\epsilon}\int_{\tau}^{\tau+\epsilon} \int_{\Omega_\tau}\modulo{u(t,x)-\bar u(t,x)}\d x\d t
\leq
\lim_{\epsilon\to 0}\frac{1}{\epsilon}\int^{\tau_0}_{\tau_0-\epsilon} \int_{\Omega_{\tau_0}}\modulo{u(t,x)-\bar u(t,x)}\d x\d t.
\]
Using the fact that both $u$ and $\bar u$ belong to $\C0\left([0,\infty);\Lloc1(\reali^n)\right)$,
we conclude that
\[
\int_{\Omega_\tau}\modulo{u(\tau,x)-\bar u(\tau,x)}\d x
\leq
\int_{\Omega_{\tau_0}}\modulo{u(\tau_0,x)-\bar u(\tau_0,x)}\d x.
\]
Finally, if $\tau_0=0$, we obtain~\eqref{eq:main} by continuity of $u=u(t)$ and $\bar u=\bar u(t)$.
\end{proof}

\section{Corollaries}

\subsection{Time-dependent bounds}

Theorem~\ref{thm:main} can be slightly generalized in the following way. Suppose that $u$ and $\bar u$ lie between two continuous functions $a,b\colon[0,\infty)\to\reali$. 
Then the theorem still holds if we choose  
\[
F(t,x) = \co f\left(t,x,\left[a(t),b(t)\right]\right).
\]
To see this, one can use a simple convergence argument. For each $h\in(0,\infty)$, define a set-valued map $U_h$ by the rule 
\[
  U_h(t) = \left[a_h^k, b_h^k\right],
  \qquad t\in \big[kh, (k+1)h\big),\qquad k \in \mathbb{Z} _+,
\]
where 
\[
  a_h^k = \min\left\{a(t)\;\colon\;t\in [kh,(k+1)h]\right\}, \qquad
  b_h^k = \max\left\{b(t)\;\colon\;t\in [kh, (k+1)h]\right\}.
\]

Fix a time interval $[\tau_0,\tau]$ and denote by $\Omega^-(K)$ and $\Omega^{h-}(K)$ 
the \emph{backward} integral funnels of inclusions
\[
  \dot x\in \co f\left(t,x, \left[a(t),b(t)\right]\right)\qquad\mbox{and}\qquad
  \dot x\in \co f\left(t,x,U_h(t)\right).
\]
Lemma~\ref{lem:Omega_conv} from Appendix says that the slices 
$\Omega^-_{\tau _0}(K)$ and $\Omega^{h-}_{\tau _0}(K)$ 
converge to each other in the sense that
\[
  \mathcal{L}^n\left(\Omega^{h-}_{\tau _0}(K)\triangle\Omega^-_{\tau _0}(K)\right)\to 0
  \qquad\text{as}\qquad h\to 0.
\]

On the other hand, Theorem~\ref{thm:main} implies the inequality
\[
\int_{K}\modulo{u(\tau,x)-\bar u(\tau,x)}\d x
\leq
\int_{\Omega^{h-}_{\tau_0}(K)}\modulo{u(\tau_0,x)-\bar u(\tau_0,x)}\d x.
\]
We can rewrite it as
\begin{multline*}
\int_{K}\modulo{u(\tau,x)-\bar u(\tau,x)}\d x
\leq
\int_{\Omega^-_{\tau_0}(K)}\modulo{u(\tau_0,x)-\bar u(\tau_0,x)}\d x\\ 
+\bigg(
  \int_{\Omega^{h-}_{\tau_0}(K)}\modulo{u(\tau_0,x)-\bar u(\tau_0,x)}\d x 
-
\int_{\Omega^-_{\tau_0}(K)}\modulo{u(\tau_0,x)-\bar u(\tau_0,x)}\d x
\bigg).
\end{multline*}
The expression in parentheses can be estimated from above by the quantity
\[
  C\mathcal{L} ^n \left(\Omega ^{h-}_{\tau_0} (K)\triangle\Omega^-_{\tau_0}(K) \right),
\]
for a certain positive $C$. 
Hence, by passing to the limit in the previous inequality, we obtain the desired result:
\[
\int_{K}\modulo{u(\tau,x)-\bar u(\tau,x)}\d x
\leq
\int_{\Omega^-_{\tau_0}(K)}\modulo{u(\tau_0,x)-\bar u(\tau_0,x)}\d x.
\]

\subsection{Estimation of the domain of dependence}

To estimate the domain of dependence we need two extra assumptions:
\begin{enumerate}
\item[$(\mathbf{f_{3}})$] the function $f$ is smooth, both maps $\partial_u f$ and $(t,x)\mapsto \div\left(f(t,x,0)\right)$ 
  are bounded\footnote{One can choose other assumptions (for example, those from \S4 of~\cite{Kruzkov}) that guarantee the applicability of
the vanishing viscosity method.}.
\item[$(\mathbf{f_{4}})$] $f(t,x,0) = 0$ for all $t$ and $x$.
\end{enumerate}

\begin{lemma} 
\label{lem:pointwise}
Let assumptions $(\mathbf{f_{1}})$--$(\mathbf{f_{4}})$ hold. Suppose that $u$ is
a Kru\v{z}kov solution of~\eqref{eq:genpde}, \eqref{eq:intcon}. If $u_0(x)\in
[a_0,b_0]$ for all $x\in \mathbb{R}^n$, then $u(t,x)\in [a(t),b(t)]$ for almost
all $(t,x)\in [0,\infty)\times \mathbb{R}^n$, where $a$, $b$ are defined by
\begin{equation}
\label{eq:ab}
a(t) = 
\begin{cases}
  a_0 e^{-nL_1t}, & a_0\geq 0,\\
  a_0 e^{nL_1t}, & a_0 < 0,
\end{cases}
\qquad
b(t) = 
\begin{cases}
  b_0 e^{nL_1t}, & b_0\geq 0,\\
  b_0 e^{-nL_1t}, & b_0 < 0.
\end{cases}
\end{equation}
\end{lemma}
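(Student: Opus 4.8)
The plan is to obtain the pointwise bound $u(t,x)\in[a(t),b(t)]$ via the vanishing viscosity method combined with a maximum-principle (ODE comparison) argument applied to spatially constant sub/supersolutions. Assumptions $(\mathbf{f_3})$ are exactly what is needed so that the viscous approximations
\[
\partial_t u^\varepsilon + \div f(t,x,u^\varepsilon) = \varepsilon \Delta u^\varepsilon,\qquad u^\varepsilon(0,\cdot)=u_0^\varepsilon,
\]
(with $u_0^\varepsilon$ a mollification of $u_0$ respecting the bounds $[a_0,b_0]$) are well posed, smooth, and converge in $\C0([0,\infty);\Lloc1)$ to the Kru\v{z}kov solution $u$; since $L^1_{\rm loc}$ convergence implies a.e. convergence along a subsequence, it suffices to establish the bound for each $u^\varepsilon$ and pass to the limit. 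So the real work is a comparison principle for the viscous equation.

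Next I would construct the comparison functions. The candidate supersolution is $t\mapsto b(t)$ from~\eqref{eq:ab}; I want to check that the constant-in-$x$ function $\bar w(t,x)=b(t)$ satisfies $\partial_t\bar w + \div f(t,x,\bar w) - \varepsilon\Delta\bar w \ge 0$. Since $\bar w$ is spatially constant, $\Delta\bar w=0$ and $\div f(t,x,b(t)) = (\div_x f)(t,x,b(t))$, the divergence taken only in the explicit $x$-slots. The key point is to bound this term: using $(\mathbf{f_4})$, $f(t,x,0)\equiv 0$, so
\[
f(t,x,b(t)) = \int_0^{b(t)} \partial_u f(t,x,s)\,\d s,
\]
and differentiating in $x$ and using that $\partial_u f(\cdot,\cdot,u)$ is $L_1$-Lipschitz in $(t,x)$ (assumption $(\mathbf{g_2})$ inherited through $(\mathbf{f_2})$), one gets $|(\div_x f)(t,x,b(t))| \le n L_1 |b(t)|$. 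Hence for $b_0\ge 0$, where $b(t)=b_0 e^{nL_1 t}\ge 0$ and $b'(t)=nL_1 b(t)$, we have $\partial_t\bar w + \div_x f(t,x,\bar w) \ge nL_1 b(t) - nL_1 b(t) = 0$, so $\bar w$ is a supersolution; the case $b_0<0$ is symmetric (there $b(t)=b_0 e^{-nL_1t}$, $b'(t)=-nL_1 b(t) = nL_1|b(t)|$, matching the worst-case sign again). The subsolution $a(t)$ is handled identically with reversed inequalities, and since $u_0^\varepsilon(x)\in[a_0,b_0]=[a(0),b(0)]$, the initial ordering holds.

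Finally I would invoke the classical parabolic comparison principle — or, to avoid boundary-at-infinity subtleties, argue directly: set $m^\varepsilon(t) = \bar w(t) - u^\varepsilon(t,x)$ and examine where $u^\varepsilon - \bar w$ attains a positive maximum over $[0,T]\times\reali^n$ (using that $u^\varepsilon$ is bounded, so such an argument can be localized by the usual penalization by a small multiple of $|x|^2$ or $e^{\lambda|x|^2}$), deriving a contradiction at an interior maximum from the equation and the supersolution inequality. The main obstacle I anticipate is precisely this non-compactness of $\reali^n$: one must justify the maximum principle for the Cauchy problem on all of space, which requires either a Phragm\'en–Lindel\"of-type growth control on $u^\varepsilon$ (available since $\partial_u f$ and $\div f(\cdot,\cdot,0)$ are bounded, giving a priori $L^\infty$ bounds on $u^\varepsilon$ uniform in $\varepsilon$) or a careful choice of auxiliary exponential barrier. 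Once the inequality $a(t)\le u^\varepsilon(t,x)\le b(t)$ is secured uniformly in $\varepsilon$, letting $\varepsilon\to 0$ along the a.e.-convergent subsequence yields the claim for almost every $(t,x)$.
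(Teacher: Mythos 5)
Your proposal is correct and follows essentially the same route as the paper: vanishing viscosity with mollified initial data, the spatially constant exponential barriers $b_0e^{\pm nL_1t}$ and $a_0e^{\mp nL_1t}$ obtained from $(\mathbf{f_4})$ together with the $(t,x)$-Lipschitz bound on $\partial_u f$, a parabolic maximum principle on the whole space, and passage to the limit $\epsilon\to 0$. The only cosmetic difference is that the paper first rewrites the viscous equation as a linear one, $\partial_t u+\div\left(c(t,x)u\right)=\epsilon\Delta u$ with $c=\partial_u f\left(t,x,\tilde u\right)$, and applies the Protter--Weinberger maximum principle with $Z(t,x)=b(t)$, whereas you verify directly that $b(t)$ is a supersolution of the quasilinear viscous equation and then invoke a comparison principle, handling the unboundedness of $\reali^n$ in the same spirit.
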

\begin{proof}
\textbf{1.} For an arbitrary $\epsilon>0$, consider the following parabolic equation
\begin{equation}
\label{eq:parabolic}
\partial_t u + \div \left(f(t,x,u)\right) = \epsilon\Delta u.
\end{equation}
Due to assumptions $(\mathbf{f_{1}})$--$(\mathbf{f_{4}})$, we may write
\begin{equation}
\label{eq:meanvalue}
f(t,x,w) = f(t,x,0) + \partial_u f\left(t,x,\tilde u(t,x,w)\right) w
= \partial_u f\left(t,x,\tilde u(t,x,w)\right) w,
\end{equation}
for some smooth $\tilde u$. Therefore, any classical solution $u^\epsilon$ of~\eqref{eq:parabolic} 
also solves the linear equation
\[
\partial_t u + \div \left(c(t,x)u\right) = \epsilon\Delta u,
\]
where $c$ is defined by
\[
  c(t,x) = \partial _u f\left(t,x,\tilde u\left(t,x,u^{\epsilon}(t,x)\right)\right).
\]
The latter PDE may be written in the form $\mathcal{L}[u]=0$ after 
introducing the parabolic operator
\[
\mathcal{L}[u] = \epsilon\Delta u - \div \left(c(t,x)u\right) - \partial_t u.
\]

\textbf{2.} Suppose that $u_0$ is a smooth function. Then the Cauchy problem for~\eqref{eq:parabolic} with
$u_0$ taken as initial condition admits a classical solution~\cite{Ladyzhenskaya}, which we denote by $u^{\epsilon}$.

To get the upper bound, we must consider two cases depending on the sign if $b_0$. For $b_0\geq 0$,
we take $Z(t,x) = b_0 e^{nL_1 t}$ and note that
\[
  \mathcal{L}[Z] = - b_0 e^{nL_1 t}\left(\div c(t,x) + n L_1\right) \leq 0,
\]
because of $(\mathbf{f_{2}})$ (more precisely, $(\mathbf{g_{2}})$ for $g=\partial_uf$). 
Since $u_0(x)\leq Z(0,x)$, the maximum principle for parabolic 
equations~\cite[Theorem 12, p. 187]{ProtterBook} implies that
$u^{\epsilon}(t,x)\leq b_0 e^{nL_1 t}$.
For $b_0<0$, we choose $Z(t,x)=b_0e^{-nL_1t}$ and get
$u^{\epsilon}(t,x)\leq b_0 e^{-nL_1 t}$, by the same arguments.
The lower bound can be obtained similarly.

By the method of vanishing viscosity~\cite[Theorem 4]{Kruzkov}, we have 
$u(t,x) = \lim_{\epsilon\to 0}u^{\epsilon}(t,x)$ for a.e. $t$ and $x$. 
This proves the lemma for smooth initial data.

\textbf{3.} To deal with an arbitrary $u_0$ we may use the usual mollification technique (see the arguments right above Theorem 4 in~\cite{Kruzkov}).
\end{proof}

Recall that the collection $\mathcal{K}(\reali^n)$ of all nonempty compact subsets of $\reali^n$
is a metric space with the Hausdorff distance 
\[
d_H(A,A') = \inf\{r>0\;\colon\;A\subseteq B(A',r),\; A'\subseteq B(A,r)\}.
\]
When we say that a set-valued map with compact values is continuous we always mean
the continuity with respect to the Hausdorff distance.

Let us take $\epsilon>0$ and perturb $a(\cdot)$, $b(\cdot)$ 
from Lemma~\ref{lem:pointwise} as follows
\begin{equation}
  \label{eq:abeps}
  a_\epsilon(t) = 
\begin{cases}
  (a_0-\epsilon) e^{-nL_1t}, & a_0\geq 0,\\
  (a_0-\epsilon) e^{nL_1t}, & a_0 < 0,
\end{cases}
\qquad
b_\epsilon(t) = 
\begin{cases}
  (b_0+\epsilon) e^{nL_1t}, & b_0\geq 0,\\
  (b_0+\epsilon) e^{-nL_1t}, & b_0 < 0.
\end{cases}
\end{equation}
Consider the set-valued map
\begin{equation}
\label{eq:Feps}
F_{\epsilon}(s,y) = \co\partial_u f\left(s,y,\left[a_\epsilon(s),b_\epsilon(s)\right]\right)
\end{equation}
and denote by $\Omega^{\epsilon}\left(B(x,\epsilon)\right)$ the \emph{backward} integral funnel of the differential inclusion
\[
\begin{cases}
\dot y \in F_{\epsilon}(s,y), \qquad s\in [0,t],\\
y(t)\in B(x,\epsilon).
\end{cases}
\]

\begin{lemma}
\label{lem:convergence}
The set-valued map $\epsilon\mapsto \Omega^{\epsilon}_{0}\left(B(x,\epsilon)\right)$
is continuous.
\end{lemma}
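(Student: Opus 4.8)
The plan is to prove continuity of $\epsilon\mapsto\Omega^\epsilon_0(B(x,\epsilon))$ as a composition of two continuity statements: first, that the right-hand sides $F_\epsilon$ depend continuously on $\epsilon$ in a suitable (e.g.\ locally uniform Hausdorff) sense, together with the continuous dependence of the terminal data $B(x,\epsilon)$ on $\epsilon$; and second, that backward integral funnels (more precisely, their $\tau_0$-slices) depend continuously on both the terminal set and the set-valued field. For the first point I would observe that, by $(\mathbf{f_2})$, $\partial_u f$ is continuous, hence uniformly continuous on compacta; since $a_\epsilon(s)\to a_0 e^{\mp nL_1 s}$ and $b_\epsilon(s)\to b_0 e^{\pm nL_1 s}$ uniformly on $[0,t]$ as $\epsilon\to\epsilon'$ (indeed $a_\epsilon,b_\epsilon$ are affine in $\epsilon$ with bounded coefficients on $[0,t]$), the intervals $[a_\epsilon(s),b_\epsilon(s)]$ converge in Hausdorff distance uniformly in $s$, and therefore $F_\epsilon(s,y)=\co\,\partial_u f(s,y,[a_\epsilon(s),b_\epsilon(s)])\to F_{\epsilon'}(s,y)$ in Hausdorff distance, uniformly for $(s,y)$ in any compact set. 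One should also note that all the $F_\epsilon$, for $\epsilon$ near a fixed $\epsilon'$, share a common bound and a common Lipschitz constant in $y$ (inherited from $(\mathbf{g_1})$--$(\mathbf{g_3})$ applied to $g=\partial_u f$), so the associated backward inclusions are uniformly well-posed on $[0,t]$.

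Next I would pass from the fields and terminal sets to the funnels. Writing $y_\epsilon$ for the time-reversal as in Remark~\ref{rem:backward}, the backward funnel of $\dot y\in F_\epsilon(s,y)$, $y(t)\in B(x,\epsilon)$ is the forward funnel of the reversed inclusion $\dot z\in\hat F_\epsilon(s,z)$ with $z(0)\in B(x,\epsilon)$; and the slice $\Omega^\epsilon_0(B(x,\epsilon))$ corresponds to the slice at the final time of this forward funnel. For forward funnels, the classical Filippov-type estimate gives: if $z(\cdot)$ solves $\dot z\in\hat F_{\epsilon'}$ with $z(0)\in B(x,\epsilon')$, then for $\epsilon$ close to $\epsilon'$ there is a solution $\tilde z(\cdot)$ of $\dot z\in\hat F_\epsilon$ with $\tilde z(0)\in B(x,\epsilon)$ and $\|z-\tilde z\|_{C([0,t])}\leq C\big(|\epsilon-\epsilon'|+\sup_{s,y\in \mathcal C} d_H(\hat F_\epsilon(s,y),\hat F_{\epsilon'}(s,y))\big)$, where $\mathcal C$ is a fixed compact tube containing all the trajectories in question (which exists by the uniform bounds above) and $C$ depends only on $t$ and the common Lipschitz constant. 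Applying this in both directions and taking slices at the endpoint yields $d_H\big(\Omega^\epsilon_0(B(x,\epsilon)),\Omega^{\epsilon'}_0(B(x,\epsilon'))\big)\to 0$ as $\epsilon\to\epsilon'$, which is exactly the asserted continuity.

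The main obstacle, such as it is, is bookkeeping rather than conceptual: one must pin down a single compact region $\mathcal C\subset[0,t]\times\reali^n$ and a single pair of constants (bound and Lipschitz modulus) valid for all $F_\epsilon$ with $\epsilon$ in a neighbourhood of the fixed value, so that the Filippov estimate can be invoked uniformly; this is where $(\mathbf{g_1})$--$(\mathbf{g_3})$ for $g=\partial_u f$ and the explicit affine-in-$\epsilon$ form of $a_\epsilon,b_\epsilon$ are used. A second small point is that the terminal ball $B(x,\epsilon)$ itself varies with $\epsilon$, but since $d_H(B(x,\epsilon),B(x,\epsilon'))=|\epsilon-\epsilon'|$ this merely contributes the $|\epsilon-\epsilon'|$ term already displayed. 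Everything else—continuity of $\co$ with respect to $d_H$, compactness of the funnel, nonemptiness of slices—is supplied by the material recalled in Section~2 and the references therein. I would therefore present the proof in three short steps matching the three paragraphs above: (1) uniform convergence of $a_\epsilon,b_\epsilon$ and hence Hausdorff convergence of $F_\epsilon$ on a common compact tube with common constants; (2) the Filippov estimate for the reversed forward inclusions; (3) taking endpoint slices to conclude.
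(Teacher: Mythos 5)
Your proof is correct, and its first half is essentially the paper's: the paper also starts by showing that $(s,y,\epsilon)\mapsto F_\epsilon(s,y)$ is continuous, only it does so by citation (continuity of the composition $(s,y,\epsilon)\mapsto\partial_u f\left(s,y,[a_\epsilon(s),b_\epsilon(s)]\right)$ and the fact that convexification preserves continuity, both quoted from Hu--Papageorgiou), whereas you argue it by hand through uniform continuity on compacta and the affine dependence of $a_\epsilon,b_\epsilon$ on $\epsilon$. Where you genuinely diverge is the second step: the paper finishes in one line by invoking Theorem 5.4 of Tolstonogov's book, a black-box continuous-dependence theorem for integral funnels with respect to a parameter, while you reprove that dependence directly via a Filippov estimate on a common compact tube, with a shared Lipschitz constant $L_1$ and an explicit $|\epsilon-\epsilon'|$ contribution from the varying terminal ball $B(x,\epsilon)$. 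Your route is longer but self-contained and quantitative (it yields a modulus of continuity in the Hausdorff distance, and it transparently covers the endpoint case $\epsilon'=0$, $B(x,0)=\{x\}$, which is what Corollary~\ref{cor:domestim} actually uses); the paper's route is shorter at the price of an external reference. One small point to tighten in your write-up: the ``common bound'' on the fields is not literally given by $(\mathbf{g_1})$--$(\mathbf{g_3})$, which only provide Lipschitz continuity in $(t,x)$; you should either invoke the boundedness of $\partial_u f$ from $(\mathbf{f_3})$, or derive linear growth from $(\mathbf{g_1})$--$(\mathbf{g_2})$ on the bounded range $\bigcup_{s\le t}[a_\epsilon(s),b_\epsilon(s)]$ and apply Gronwall to confine all trajectories on $[0,t]$ to a fixed compact tube before running the Filippov argument. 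This is routine and does not affect correctness.
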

\begin{proof}
  The map $U\colon (s,\epsilon)\mapsto \left[a_\epsilon(s),b_\epsilon(s)\right]$ is obviously continuous. Therefore,
  the composition $(s,y,\epsilon)\mapsto \partial_u f\left(s,y,U(s,\epsilon)\right)$ is continuous as well~\cite[Proposition 2.56]{Papageorgiou}. Since convexification preserves continuity 
  (see~\cite[Proposition 2.42]{Papageorgiou}), also the map  
  $(s,y,\epsilon)\mapsto F_{\epsilon}(s,y)$ is continuous. It remains to apply Theorem 5.4 from~\cite[p. 213]{Tolstonogov} to complete the proof.
\end{proof}

Now we have everything at hand to estimate the domain of dependence.

\begin{corollary}
\label{cor:domestim}
Suppose that assumptions $(\mathbf{f_{1}})$--$(\mathbf{f_{4}})$ hold.
Let $u$ be a Kru\v{z}kov solution of~\eqref{eq:genpde}, \eqref{eq:intcon} and 
the values of $u_0$ belong to an interval $[a_0,b_0]$. Then, for each $x\in \reali^n$, we have
\[
\mathcal{D}_u(t,x)\subseteq \Omega^-_{0}(x).
\]
Here $\Omega^-(x)$ is the backward integral funnel of the differential inclusion
\[
  \dot y(s)\in \co\partial_u f\left(s,y(s),\left[a(s),b(s)\right]\right),\qquad y(t)=x,
\]
where $a(\cdot)$ and $b(\cdot)$ are defined by~\eqref{eq:ab}.
\end{corollary}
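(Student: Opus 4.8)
The plan is to deduce Corollary~\ref{cor:domestim} from Theorem~\ref{thm:main} (in its time-dependent-bounds form) by unwinding Definition~\ref{def:dd}. Fix $x\in\reali^n$ and a bounded function $w$ with compact support $S$ disjoint from $\Omega^-_0(x)$. For small $\epsilon>0$ let $v^\epsilon$ be the Kru\v{z}kov solution with initial datum $v_0^\epsilon=u_0+\epsilon w$; we must show $v^\epsilon$ coincides with $u$ at $(t,x)$ in the sense of~\eqref{eq:conicide}, i.e.\ that the average of $|u(t,\cdot)-v^\epsilon(t,\cdot)|$ over $B(x,r)$ tends to $0$ as $r\to0$. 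Since the balls $B(x,r)$ are compact tubular neighbourhoods, Theorem~\ref{thm:main} applied with $K=B(x,r)$, $\tau=t$, $\tau_0=0$ and the time-dependent flow bound gives
\[
\int_{B(x,r)}\bigl|u(t,y)-v^\epsilon(t,y)\bigr|\d y
\leq
\int_{\Omega^-_0(B(x,r))}\bigl|u_0(y)-v_0^\epsilon(y)\bigr|\d y
=\epsilon\int_{\Omega^-_0(B(x,r))}|w(y)|\d y,
\]
so it suffices to show the right-hand side vanishes for all sufficiently small $r$; in fact it suffices to show that $\Omega^-_0\bigl(B(x,r)\bigr)$ is disjoint from $S$ for $r$ small.

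The key step is therefore a set-convergence statement: $\Omega^-_0(B(x,r))\to\Omega^-_0(\{x\})=\Omega^-_0(x)$ in the Hausdorff metric as $r\to0$. This is exactly the continuity (in the initial compact set, with respect to $d_H$) of the backward reachable set of the inclusion $\dot y\in\co\partial_uf(s,y,[a(s),b(s)])$ with terminal condition $y(t)\in B(x,r)$; it follows from standard continuous-dependence results for differential inclusions with a continuous, convex-compact-valued, sublinear right-hand side (the same Tolstonogov-type theorem, \cite[Theorem 5.4, p.~213]{Tolstonogov}, invoked in Lemma~\ref{lem:convergence}, now with the perturbation entering through the terminal set rather than through $\epsilon$). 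Since $\Omega^-_0(x)$ is compact and disjoint from the closed set $S$, their distance $\delta:=d(S,\Omega^-_0(x))$ is positive, and Hausdorff convergence yields $\Omega^-_0(B(x,r))\subseteq B(\Omega^-_0(x),\delta/2)$, hence disjoint from $S$, once $r$ is small enough. Plugging this back in, the right-hand side of the displayed inequality is $0$ for all small $r$, so the average over $B(x,r)$ is $0$, which is more than enough for~\eqref{eq:conicide}. Because this holds for every admissible perturbation $w$ and every small $\epsilon$, the minimality in Definition~\ref{def:dd} gives $\mathcal{D}_u(t,x)\subseteq\Omega^-_0(x)$.

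Two technical points need attention. First, to invoke Theorem~\ref{thm:main} in its time-dependent form we must know that both $u$ and $v^\epsilon$ take values in $[a(s),b(s)]$ on $[0,t]$: this is supplied by Lemma~\ref{lem:pointwise} for $u$ (whose data lies in $[a_0,b_0]$), and for $v^\epsilon$ we note that for $\epsilon$ small the data $u_0+\epsilon w$ still takes values in a slightly enlarged interval $[a_0-\epsilon',b_0+\epsilon']$; applying Lemma~\ref{lem:pointwise} with these perturbed endpoints and then letting the comparison flow be the one with bounds $a_\epsilon,b_\epsilon$ from~\eqref{eq:abeps}, the time-dependent version of Theorem~\ref{thm:main} gives the inequality with $\Omega^{\epsilon-}_0(B(x,r))$ in place of $\Omega^-_0(B(x,r))$. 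One then uses Lemma~\ref{lem:convergence} together with the Hausdorff continuity in $r$ to pass both $\epsilon\to0$ and $r\to0$ and conclude that the relevant funnel is eventually disjoint from $S$. Second, the Kru\v{z}kov solution with datum $u_0+\epsilon w$ exists and is unique under $(\mathbf{f_1})$--$(\mathbf{f_4})$, which is exactly what the viscosity method in~\cite{Kruzkov} guarantees. The main obstacle is organizing the two-parameter limit cleanly: one must ensure that the $\epsilon$ governing the size of the datum perturbation and the $\epsilon$ enlarging the bounds and the terminal ball are handled in the right order, so that for each fixed small datum-perturbation the funnel stays off $S$; the convergence lemmas (\ref{lem:convergence} and the Hausdorff continuity in $r$) are precisely designed to make this routine, so no genuinely new estimate is needed beyond Theorem~\ref{thm:main}.
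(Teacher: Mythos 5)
Your proposal is correct and follows essentially the same route as the paper: the time-dependent form of Theorem~\ref{thm:main} applied with the enlarged bounds $a_\epsilon,b_\epsilon$ of~\eqref{eq:abeps} (justified by Lemma~\ref{lem:pointwise} for the perturbed datum), together with the funnel-continuity Lemma~\ref{lem:convergence} to conclude that the relevant backward funnel misses $\spt w$, whence the right-hand side vanishes and coincidence in the sense of~\eqref{eq:conicide} follows. The only organizational difference is that the paper couples the two small parameters by averaging over $B(x,\epsilon)$ itself, so Lemma~\ref{lem:convergence} as stated suffices, whereas you keep the averaging radius $r$ and the perturbation size $\epsilon$ separate and then need either monotonicity of the funnel in $r$ or a continuity-in-$r$ variant of the same Tolstonogov theorem — a harmless, correctly flagged variation.
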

\begin{proof}
  Let $v_0=u_0+\epsilon w$ as in Definition~\ref{def:dd}. Without loss of
  generality, we may assume that $\|w\|_{\L\infty(\reali^n)}=1$. In this case,
  $v_0$ takes values in $[a_0-\epsilon, b_0+\epsilon]$. Hence, by
  Lemma~\ref{lem:pointwise}, the Kru\v{z}kov solution $v$ of~\eqref{eq:genpde},
  \eqref{eq:intcon} with initial data $v_0$ can be estimated as follows
  \[
    a_{\epsilon}(t) \leq v(t,x) \leq b_{\epsilon}(t)\qquad\text{for all}\quad 
    t\in [0,\infty),\;\; x\in\reali^n,
  \]
  where $a_{\epsilon}(\cdot)$ and $b_{\epsilon}(\cdot)$ are given by~\eqref{eq:abeps}.

  Suppose that $\spt w\cap\Omega^-_{0}(x)=\varnothing$. Since $\reali^n$ is a normal space, 
  there exists an open neighbourhood $\mathcal{O}\left(\Omega^-_{0}(x)\right)$ of 
  the set $\Omega^-_{0}(x)$ such that
  \[
  \spt w\cap \mathcal{O}\left(\Omega^-_{0}(x)\right) = \varnothing.
  \]
  By Lemma~\ref{lem:convergence}, $\Omega^{\epsilon}_{0}\left(B(x,\epsilon)\right)$
  belongs to this neighbourhood, for all $\epsilon$ small enough. Hence,
  \[
  \spt w\cap \Omega^{\epsilon}_{0}\left(B(x,\epsilon)\right) = \varnothing.
  \]

  Now, recalling that Theorem~\ref{thm:main} holds also for time-dependent bounds, we may write
  \[
  \int_{B(x,\epsilon)} |u(\tau,y)-v(\tau,y)|\d y \leq 
  \epsilon\int_{\Omega^{\epsilon}_{0}\left(B(x,\epsilon)\right)}w(y)\d y.
  \]
  Previously, we have proved that the right-hand side vanishes for small $\epsilon$. 
  Hence, after dividing both 
  parts of the inequality by $\mathcal{L}^n\left(B(x,\epsilon)\right)$ and passing to the limit
  as $\epsilon\to 0$, we discover that $u$ and $v$ coincide at $(t,x)$ in the sense of
  equation~\eqref{eq:conicide}. The proof is complete.
\end{proof}

\subsection{Expansion of the support}

The following result shows how differential inclusions can be used to estimate 
the support of Kru\v{z}kov solutions.

\begin{corollary}
\label{cor:support}
Suppose that assumptions $(\mathbf{f_{1}})$--$(\mathbf{f_{4}})$ hold.
Let $u$ be a Kru\v{z}kov solution of~\eqref{eq:genpde}, \eqref{eq:intcon} such that
\[
    0\leq a(t) \leq u(t,x) \leq b(t)\qquad\text{for all}\quad 
    t\in [0,\infty),\;\; x\in\reali^n,
\]
where $a(\cdot)$ and $b(\cdot)$ are continuous functions.
If $u_0$ has compact support, then 
\[
\spt u(t,\cdot)\subseteq \Omega^+_t(\spt u_0),\qquad t\in [0,\infty),
\]
where $\Omega^+(\spt u_0)$ is the forward integral funnel of the differential inclusion
\[
  \dot y(s)\in \co\partial_u f\left(s,y(s),\left[a(s),b(s)\right]\right),\qquad y(0)\in \spt u_0.
\]

\end{corollary}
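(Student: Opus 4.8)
The plan is to deduce Corollary~\ref{cor:support} from Theorem~\ref{thm:main} (in its time-dependent version) by taking $\bar u$ to be the zero solution and choosing the compact test set $K$ outside the forward funnel. First I would observe that, thanks to assumption $(\mathbf{f_4})$, the function $\bar u\equiv 0$ is a Kru\v{z}kov solution of~\eqref{eq:genpde} with $\bar u_0=0$; indeed $f(t,x,0)=0$ makes the entropy inequality~\eqref{eq:admissible} trivially an equality. Hence, for any $t>0$ and any compact tubular neighbourhood $K\subset\reali^n$, Theorem~\ref{thm:main} (time-dependent bounds, applied on $[\tau_0,\tau]=[0,t]$ with the given $a(\cdot),b(\cdot)$) yields
\[
\int_K |u(t,x)|\,\d x \leq \int_{\Omega^-_0(K)} |u_0(x)|\,\d x,
\]
where $\Omega^-$ is the backward funnel of $\dot y\in\co\partial_u f(s,y,[a(s),b(s)])$ ending in $K$ at time $t$.

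Next I would exploit the duality between the backward funnel ending in $K$ at time $t$ and the forward funnel starting from $\spt u_0$ at time $0$: a trajectory $y(\cdot)$ of the inclusion joins a point of $\spt u_0$ at time $0$ to a point of $K$ at time $t$ if and only if $K$ meets $\Omega^+_t(\spt u_0)$ \emph{and} $\spt u_0$ meets $\Omega^-_0(K)$; more to the point, $\Omega^-_0(K)\cap\spt u_0=\varnothing$ whenever $K\cap\Omega^+_t(\spt u_0)=\varnothing$, because any point of $\Omega^-_0(K)$ is, by definition, the left endpoint of a trajectory landing in $K$, and the right endpoint of that same trajectory then lies in $\Omega^+_t(\spt u_0)$, forcing the left endpoint to have image (under the flow) inside $K\cap\Omega^+_t(\spt u_0)$. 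So if $K$ is disjoint from $\Omega^+_t(\spt u_0)$, the right-hand side integral is taken over a set disjoint from $\spt u_0$, hence vanishes, giving $\int_K|u(t,x)|\,\d x=0$.

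It then follows that $u(t,\cdot)=0$ a.e.\ on every compact tubular neighbourhood $K$ disjoint from the compact set $\Omega^+_t(\spt u_0)$. To conclude $\spt u(t,\cdot)\subseteq\Omega^+_t(\spt u_0)$ I would argue that any point $x_0\notin\Omega^+_t(\spt u_0)$ admits a small closed ball $K=B(x_0,r)$ — which is a tubular neighbourhood — still disjoint from the funnel slice (using compactness of $\Omega^+_t(\spt u_0)$, itself guaranteed by the discussion in Section~2), whence $u(t,\cdot)$ integrates to zero against the measure on $K$; since this holds for a neighbourhood basis of every such $x_0$, the complement of $\Omega^+_t(\spt u_0)$ is $u(t,\cdot)\mathcal{L}^n$-null, which is precisely the meaning of $\spt u(t,\cdot)\subseteq\Omega^+_t(\spt u_0)$. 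The case $t=0$ is immediate since $\Omega^+_0(\spt u_0)=\spt u_0$.

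The main obstacle I anticipate is the bookkeeping around the backward/forward funnel duality: one must be careful that Theorem~\ref{thm:main} is stated for backward funnels $\Omega^-(K)$ issuing from a tubular neighbourhood $K$ at the \emph{final} time, whereas the statement of the corollary is phrased with the forward funnel $\Omega^+(\spt u_0)$, and $\spt u_0$ is merely compact, not necessarily a tubular neighbourhood. The resolution is that we only ever feed \emph{balls} $K=B(x_0,r)$ into Theorem~\ref{thm:main} (balls are legitimate tubular neighbourhoods), and we use $\spt u_0$ only to bound the right-hand side, where no regularity of $\spt u_0$ is needed; the key geometric fact to verify cleanly is the implication $K\cap\Omega^+_t(\spt u_0)=\varnothing\Rightarrow\Omega^-_0(K)\cap\spt u_0=\varnothing$, which is a direct consequence of the fact that trajectories of the inclusion can be read in either time direction (cf.\ Remark~\ref{rem:backward}).
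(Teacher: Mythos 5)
Your proposal is correct and follows essentially the same route as the paper's proof: take $\bar u\equiv 0$ (legitimate by $(\mathbf{f_4})$), apply the main inequality with a closed ball disjoint from $\Omega^+_t(\spt u_0)$, note that the corresponding backward funnel slice at time $0$ misses $\spt u_0$, and then exhaust the open complement of $\Omega^+_t(\spt u_0)$ by countably many such balls. The only cosmetic differences are your Lindel\"of-type covering of the complement versus the paper's citation of a Vitali-type covering result, and your slightly more direct (concatenation-free) justification that the two funnels force $\Omega^-_0(K)\cap\spt u_0=\varnothing$.
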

\begin{proof}
Let $K=\spt u_0$. Construct the forward funnel $\Omega^+(K)$
issuing from $K$ at the time moment $t=0$. Fix a positive $t$, choose a closed ball $B$ outside $\Omega_{t}^+(K)$ and construct the 
backward funnel $\Omega^-\left(B\right)$ issuing from $B$ at the time moment $t$.
Clearly, $\Omega^+(K)$ and $\Omega^-\left(B\right)$ cannot intersect. Therefore, $u_0=0$ a.e. on $\Omega_{0}^-\left(B\right)$.
According to $(\mathbf{f_{4}})$, we may take $\bar u\equiv 0$ in~\eqref{eq:main} to obtain 
\[
\int_{B}u(t,x)\d x
\leq
\int_{\Omega^-_{0}\left(B\right)}u_0(x)\d x,
\]
which means that $u(t,\cdot)=0$ a.e. on $B$.

Since the set $\left(\Omega_{t}^+(K)\right)^c$ is open, there exists a countable collection
of closed balls $B_i$ from $\left(\Omega_{t}^+(K)\right)^c$ such that
\[
\mathcal{H}^{n}\left(\left(\Omega_{t}^+(K)\right)^c\setminus \bigcup_i B_i\right)=0
\]
(see~\cite[Corollary 2, \S 1.5]{EvansGariepy}). By the above arguments $u(t,\cdot)=0$ a.e. on every $B_i$. The proof is complete.
\end{proof}

\section{Application}

Consider the following nonlinear conservation law
\begin{equation}
\label{eq:controlled}
\partial u_t + \div\left(f\left(t,x,u,\xi(t)\right)\right) = 0,
\end{equation}
where $\xi=\xi(t)$ is a control parameter. Suppose that the initial
function $u_0$ is nonnegative and $f$ takes the form
\begin{equation}
\label{eq:conrolf}
f(t,x,u,\xi) = v(t,x,\xi)u + G(u),
\end{equation}
where 
\begin{equation}
\label{eq:Gassumptions}
G(0) = 0\quad\mbox{and}\quad |G'(u)|\leq c\quad \mbox{for some}\quad c>0. 
\end{equation}

By Theorem 3 in~\cite{Kruzkov}, the nonnegativity of $u_0$ implies the nonnegativity of the corresponding 
Kru\v{z}kov solution $u$. Therefore, we may think of a mass distributed on $\reali^n$ and drifting along 
the flow $f$; in this case $u(t,\cdot)$ represents the density at the time moment $t$.

Consider the following confinement problem: given compact set $K\subset \reali^n$ 
and an initial mass distribution $u_0$
with $\spt u_0\subseteq K$, find a control strategy $\xi=\xi(t)$ so that $\spt u(t,\cdot)\subseteq K$ for all $t>0$.

In other words, we want to keep the mass inside $K$ for every $t>0$, by choosing a suitable $\xi=\xi(t)$.

Let $u_\xi$ be the Kru\v{z}kov solution of~\eqref{eq:controlled} satisfying the initial function $u_0$ and the control strategy $\xi=\xi(t)$.
Note that 
\[
\partial_u f\left(t,x,[0,\infty),\xi\right) \subseteq v(t,x,\xi) + B(0,c).
\]
Hence the support of $u_\xi(t,\cdot)$ may be estimated by the reachable set of the differential inclusion
\[
\dot x \in v\left(t,x,\xi(t)\right) + B(0,c).
\]
We denote this reachable set by $\Omega^+_t(\spt u_0,\xi)$ indicating that it depends also on the control $\xi=\xi(t)$.
To be more precise, by Corollary~\ref{cor:support}, we may write
\[
\spt u_\xi(t,\cdot)\subseteq \Omega^+_t(\spt u_0,\xi),\qquad t\in [0,\infty).
\]
Thus, any function $\xi$ satisfying
\[
\Omega^+_t(\spt u_0,\xi) \subseteq K,\qquad t\in [0,\infty),
\]
solves our initial confinement problem.

Such $\xi$ may be constructed by applying, for example, the technique developed in~\cite{ColomboPogodaev1, ColomboPogodaev2}.
In particular, one may prove the following result.

\begin{theorem}
  \label{thm:Confinen}
  Let $n \in \naturali$ with $n \geq 2$ and the map $f\colon
[0,\infty)\,\times\,\reali^n\times\reali\times\reali^n\to\reali^n$ take the
form~\eqref{eq:conrolf}, where $G$ satisfies~\eqref{eq:Gassumptions} and $v$ is
given by
  \[
  v(t,x,\xi) = \psi\left(|x-\xi|\right)\,(x-\xi),
  \]
  for some real-valued function $\psi$. Let the map
  $(t,x,u)\mapsto f\left(t,x,u,\xi(t)\right)$ satisfies assumptions
$(\mathbf{f_{1}})$--$(\mathbf{f_{4}})$, for any smooth $\xi = \xi(t)$. Denote by
$u_\xi$ the Kru\v{z}kov solution of~\eqref{eq:controlled} corresponding to the
initial function $u_0$ and the control strategy $\xi$. Suppose that there exist
positive $R_*^-$, $R_*^+$ and $R$ such that
  \begin{equation*}
    \frac{1}{\sqrt{\pi}}\cdot
    \frac{\Gamma\left(\frac{n}{2}\right)}
    {\Gamma\left(\frac{n-1}{2}\right)}
    \int_0^\pi
    \psi \left(\sqrt{R^2 + R_*^2 - 2 R_* \, R\, \cos\theta} \right)
    (R_* - R \, \cos\theta) \, \sin^{n-2} \theta \, \d\theta
    <
    -c
  \end{equation*}
  for all $R_* \in [R_*^-,\, R_*^+]$.  Then, there exists a smooth control $\bar\xi \colon [0,\infty)\,\to \partial B(0,R)$ such that
  \begin{equation*}
    \spt u_0 \subseteq B (0, R_*^-)
    \quad \mbox{ implies } \quad
    \spt u_{\bar\xi}(t,\cdot) \subseteq B (0, R_*^+)
    \quad \mbox{ for all } \quad t\in [0,\infty) \,.
  \end{equation*}
\end{theorem}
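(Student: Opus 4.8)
The plan is to reduce the confinement problem to a purely geometric statement about the reachable set $\Omega^+_t(\spt u_0,\bar\xi)$ of the differential inclusion $\dot x\in v(t,x,\xi(t))+B(0,c)$, and then to produce the control $\bar\xi$ by a symmetry/comparison argument. By Corollary~\ref{cor:support} together with the inclusion $\partial_u f(t,x,[0,\infty),\xi)\subseteq v(t,x,\xi)+B(0,c)$ noted above, it suffices to exhibit a smooth $\bar\xi\colon[0,\infty)\to\partial B(0,R)$ such that $\Omega^+_t(B(0,R_*^-),\bar\xi)\subseteq B(0,R_*^+)$ for all $t$. First I would observe that the drift $v(t,x,\xi)=\psi(|x-\xi|)(x-\xi)$ is, for each fixed $\xi$ on the sphere $\partial B(0,R)$, a central field with respect to the point $\xi$; the idea is to spin $\bar\xi$ around $\partial B(0,R)$ fast enough (or to average over $\xi$) so that the \emph{effective} radial drift felt by a trajectory at distance $R_*$ from the origin is the spherical average appearing in the hypothesis. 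This is exactly the technique of \cite{ColomboPogodaev1,ColomboPogodaev2}: one shows that a rapidly rotating control produces, in the averaged sense, the mean field obtained by integrating $v(t,x,\xi)$ over $\xi\in\partial B(0,R)$ against the normalized surface measure.

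The key computation is to identify that average. Fix a point $x$ with $|x|=R_*$ and integrate $\psi(|x-\xi|)(x-\xi)$ over $\xi\in\partial B(0,R)$; by symmetry only the radial component $x/|x|$ survives, and parametrizing $\xi$ by the polar angle $\theta$ from the direction of $x$ gives, with the surface-measure normalization constant $\frac{1}{\sqrt\pi}\,\Gamma(n/2)/\Gamma((n-1)/2)$, precisely the integral
\[
\frac{1}{\sqrt{\pi}}\,\frac{\Gamma(\tfrac n2)}{\Gamma(\tfrac{n-1}2)}\int_0^\pi \psi\!\left(\sqrt{R^2+R_*^2-2R_*R\cos\theta}\right)(R_*-R\cos\theta)\sin^{n-2}\theta\,\d\theta
\]
times $x/|x|$, where $|x-\xi|^2=R^2+R_*^2-2R_*R\cos\theta$ and $(x-\xi)\cdot x/|x|=R_*-R\cos\theta$. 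The hypothesis says this averaged radial drift is $<-c$ uniformly for $R_*\in[R_*^-,R_*^+]$, so the averaged dynamics contracts the radius at rate at least $c$ against the worst-case outward perturbation $B(0,c)$ — i.e., $\frac{d}{ds}|x(s)|\le (\text{averaged radial drift})+c<0$ whenever $R_*^-\le|x(s)|\le R_*^+$. Hence the ball $B(0,R_*^+)$ is forward-invariant for the averaged inclusion starting from $B(0,R_*^-)$.

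Then I would invoke the averaging result for differential inclusions from \cite{ColomboPogodaev1,ColomboPogodaev2}: there exists a smooth $\bar\xi\colon[0,\infty)\to\partial B(0,R)$ (a sufficiently fast rotation, whose speed may be tuned using the uniformity of the strict inequality and the Lipschitz bounds on $\psi$ and $G$ coming from $(\mathbf{f_1})$–$(\mathbf{f_4})$) for which the reachable sets of $\dot x\in v(t,x,\bar\xi(t))+B(0,c)$ stay within an arbitrarily small neighbourhood of the reachable sets of the averaged inclusion; choosing that neighbourhood inside $B(0,R_*^+)$ yields $\Omega^+_t(B(0,R_*^-),\bar\xi)\subseteq B(0,R_*^+)$. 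Combined with Corollary~\ref{cor:support} this gives $\spt u_{\bar\xi}(t,\cdot)\subseteq\Omega^+_t(\spt u_0,\bar\xi)\subseteq B(0,R_*^+)$ whenever $\spt u_0\subseteq B(0,R_*^-)$. The main obstacle is the averaging step: one must verify that the contracting averaged field, together with the strict margin and uniform continuity, legitimately yields a genuine smooth control with the reachable-set containment — i.e., that the quantitative averaging lemma of \cite{ColomboPogodaev1,ColomboPogodaev2} applies under assumptions $(\mathbf{f_1})$–$(\mathbf{f_4})$; the rest is the surface-integral identity, which is routine.
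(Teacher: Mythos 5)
Your proposal is correct and follows essentially the same route as the paper: the paper likewise reduces the problem via Corollary~\ref{cor:support} and the inclusion $\partial_u f\left(t,x,[0,\infty),\xi\right)\subseteq v(t,x,\xi)+B(0,c)$ to keeping the reachable sets $\Omega^+_t(\spt u_0,\xi)$ of $\dot x\in v\left(t,x,\xi(t)\right)+B(0,c)$ inside the target ball, and then obtains $\bar\xi$ from the constructive rotating-control averaging technique of~\cite{ColomboPogodaev1, ColomboPogodaev2}, which is exactly what you invoke, your reading of the hypothesis as the normalized spherical average of the radial component of $v$ being the correct one. The only wording to be careful about is that for $n\geq 3$ the control must sweep $\partial B(0,R)$ along a curve whose occupation measure approximates the uniform surface measure rather than merely rotate along a great circle, but since you state the target precisely as equidistribution with respect to the normalized surface measure, this is not a gap.
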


Remark that the proof of the above theorem is constructive, in the sense that the confining strategy $\bar\xi$ is explicitly defined.


\section{Appendix: proofs of auxiliary results}

Before passing to the proofs, let us introduce a few notions from Geometric Measure Theory.

Consider a compact subset $E$ of $\reali^n$. The \emph{$n$-dimensional density} of $E$ at $x$ is given by
\[
\Theta^n(E,x) = \lim_{r\to 0+} \frac{\mathcal{H}^n\left(E\cap B(x,r)\right)}{\mathcal{H}^n(B(x,r))},
\]
the symbol $E^t$ denotes the set of all points where $E$ has density $t$. 
The \emph{essential boundary} of $E$ is the set of points where $E$ has density strictly between $0$ and $1$, i.e.,
$\partial^*E = \reali^n\setminus (E^0\cup E^1)$. The \emph{perimeter} of $E$ is
denoted by $P(E)$.
Here we intentionally skip the exact definition of the perimeter
(one may find it, for instance, in~\cite{AFP}), instead we use the formula $P(E) = \mathcal{H}^{n-1}(\partial^* E)$,
which holds for any compact set $E$ (see~\cite[p. 159]{AFP}).

\begin{proofof}{Proposition~\ref{prop:tubular}}
The $(n-1)$-rectifiability of $\partial A$ was established in~\cite[Proposition 2.3]{Rataj}. 
The very definition of $(n-1)$-rectifiability implies that $\mathcal{H}^{n-1}(\partial A)<\infty$.
Thus, we only need to show that the outer Minkovski content exists and it is bounded by $\mathcal{H}^{n-1}(\partial A)$.

If $x\in \partial A$, then for all sufficiently small $r>0$, one may find an $n$-dimensional closed ball $B_{r/2}$ of radius $r/2$
such that $x\in B_{r/2}$ and $B_{r/2} \subset A$. Using the obvious inclusion
$B_{r/2} \subseteq A\cap B(x,r)$,
we deduce that
\[
\Theta^{n}\left(A,x\right)\geq 2^{-n}.
\]
It follows from the arbitrariness of $x$ that $\partial A\cap A^0 = \varnothing$.

Now, taking into account the rectifiability of $\partial A$, we may apply Proposition 4.1 from~\cite{Villa},
which states that
\[
\mathcal{SM}^{n}\left(A\right) = P\left(A\right) + 2\mathcal{H}^n\left(\partial A\cap A^0\right).
\]
Since the second term from the right-hand side is zero, we get
$\mathcal{SM}^{n}\left(A\right) = P\left(A\right)$. The perimeter is bounded,
because $P(A) = \mathcal{H}^{n-1}(\partial^* A)\leq \mathcal{H}^{n-1}(\partial
A)$.
\end{proofof}

We have said before that integral funnels are ``almost'' like tubular neighbourhoods. The next theorem, taken from~\cite{Pogodaev2016},
clarifies this statement.

\begin{theorem}
\label{thm:intball}
Suppose that $K$ is a compact $n$-dimensional tubular neighbourhood, $F$ is defined 
by~\eqref{eq:F}, and assumptions
$(\mathbf{g_{1}})$--$(\mathbf{g_{3}})$ hold. Then
\[
\Omega^+(K)\cup \{(t,x)\;\colon\; t\leq t_0,\; t\geq t_1\}
\]
is an $(n+1)$-dimensional tubular neighbourhood.
\end{theorem}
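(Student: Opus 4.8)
The plan is to show that the set $S = \Omega^+(K)\cup\{(t,x)\colon t\le\tau_0\ \text{or}\ t\ge\tau\}$ is a tubular neighbourhood by exhibiting a closed set $C\subseteq S$ and a radius $\rho>0$ with $S = B(C,\rho)$. Since $K$ is a compact $n$-dimensional tubular neighbourhood, by Definition~\ref{def:tubular} we may write $K = B(K_0,r)$ for some closed $K_0\subseteq K$ and some $r>0$. The natural candidate is to take $\rho = r$ (shrunk if necessary, see below) and to let $C$ be the forward integral funnel $\Omega^+(K_0)$ of the \emph{same} inclusion~\eqref{eq:inclusion}, extended by the half-slabs $\{t\le\tau_0\}$ and $\{t\ge\tau\}$ in the time variable only along $K_0$; more precisely, $C$ should be the set of points whose distance to $S^c$ is at least $\rho$. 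So the real content is the inclusion $S = B(\Omega^+(K_0)\cup\{\text{end slabs over }K_0\},\,r)$, i.e.\ a fattening of the ``core funnel'' by $r$ in all $n+1$ coordinates reproduces the full funnel together with its temporal extension.

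The key geometric mechanism is that solutions of~\eqref{eq:inclusion} depend on their initial point in a bi-Lipschitz, indeed $C^{1,1}$, fashion: under $(\mathbf{g_1})$--$(\mathbf{g_3})$ the flow map $x\mapsto x(\cdot;x)$ along any measurable selection is a $C^{1,1}$ diffeomorphism with derivative bounds governed by $L_1$ (Gr\"onwall) and second-derivative bounds governed by $L_2$. Consequently, for the interior ball condition one would argue as in Proposition~\ref{prop:tubular}: around any boundary point $(t^*,x^*)\in\partial S$, propagate the interior ball of $K$ at the associated initial point (or use an endpoint ball for points with $t^*=\tau_0$, $t^*=\tau$) forward by the flow. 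The image of a ball under a $C^{1,1}$ diffeomorphism contains a genuine ball whose radius is controlled uniformly on the compact time interval $[\tau_0,\tau]$; choosing $\rho$ to be the minimum of these propagated radii and of $r$ (for the slab parts) gives a uniform interior ball radius. For the reverse, one checks that the $\rho$-neighbourhood of the core does not overshoot: a point within $\rho$ of $\Omega^+(K_0)$ lies on a trajectory starting within the appropriate fattening of $K_0$, hence within $K = B(K_0,r)$, hence in $\Omega^+(K)$ — this uses the lower Lipschitz bound on the flow and the convexity of $F$ to reconstruct a legitimate trajectory through the nearby point.

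The main obstacle is the behaviour near the two ``corner'' regions where the lateral boundary $(\partial\Omega^+(K))^s$ meets the horizontal faces at $t=\tau_0$ and $t=\tau$: there the naive interior ball propagated from $K$ is tangent to the slab $\{t\ge\tau_0\}$ (resp.\ $\{t\le\tau\}$) rather than sitting inside $S$, so one genuinely needs the \emph{union} with the half-slabs to restore the ball condition, and one must verify that at such a corner point an interior ball of the enlarged set $S$ — partly inside the funnel, partly inside a slab — still exists with a uniform radius. Handling this amounts to a careful case analysis on $\pi_{\Omega^+(K)}(t,x)$ exactly parallel to the decomposition of $\Xi_\epsilon$ in the proof of Theorem~\ref{thm:main}, and to noting that the slab $\{t\ge\tau_0\}$ itself is trivially a tubular neighbourhood whose interior balls are compatible (after possibly shrinking $\rho$) with those coming from the funnel. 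Since this is precisely the computation carried out in~\cite{Pogodaev2016}, I would cite that reference for the detailed corner estimate and present here only the flow-regularity and ball-propagation steps in full.
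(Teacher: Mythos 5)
There is a mismatch of expectations here: the paper does not prove Theorem~\ref{thm:intball} at all --- it is explicitly imported from~\cite{Pogodaev2016} ("The next theorem, taken from~\cite{Pogodaev2016}, clarifies this statement"), so the only "proof" in the paper is a citation. Your proposal, after its sketch, also defers the decisive step (the corner regions where the lateral boundary meets the faces $t=\tau_0$, $t=\tau$, which is precisely why the half-slabs must be adjoined) to the same reference, so in the end you prove nothing beyond what the paper already delegates; and the parts you do sketch contain genuine gaps. First, your ball-propagation mechanism only produces $n$-dimensional balls: flowing an interior ball of $K$ along a fixed measurable selection gives, via the $C^{1,1}$ dependence on initial data, a spatial ball inside each slice $\Omega^+_{t}(K)$, but the theorem asserts the interior ball property for the $(n+1)$-dimensional space--time set, and a uniform interior ball in every slice does not by itself yield an $(n+1)$-dimensional interior ball in the funnel (one must tilt and fit a space--time ball inside the union of moving slices, using the Lipschitz motion of the centers and quantitative lower bounds; this is exactly the nontrivial content of the cited theorem and of~\cite[Theorem 2.1]{Lorenz05} for slices, and it is not addressed). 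Second, the proposed identity $S=B\bigl(\Omega^+(K_0)\cup\{\text{end slabs over }K_0\},r\bigr)$ with $K=B(K_0,r)$ is not justified and is dubious as stated: Gr\"onwall only gives $\Omega^+_t\bigl(B(K_0,r)\bigr)\subseteq B\bigl(\Omega^+_t(K_0),re^{L_1(t-\tau_0)}\bigr)$, and trajectories emanating from the outer shell of $K$ generally spread, so the full funnel is not the $r$-fattening of the core funnel; your fallback definition of $C$ as "the set of points whose distance to $S^c$ is at least $\rho$" is circular, since $S=B(C,\rho)$ for that $C$ is exactly the uniform interior ball property you are supposed to establish.

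In short, as a self-contained argument the proposal has a real gap (slice-wise versus space--time interior balls, and an unsupported choice of core set), and as a citation it coincides with what the paper already does; if you want an actual proof you would need to reproduce the quantitative interior-ball construction of~\cite{Pogodaev2016}, including the corner analysis near $t=\tau_0$ and $t=\tau$, rather than outline the easy part and cite the hard one.
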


We derive Proposition~\ref{prop:rectifiable} from that theorem.

\begin{lemma}
\label{lem:boundaries}
Essential and topological boundaries of $\Omega^+(K)$ coincide. 
\end{lemma}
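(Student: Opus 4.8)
Throughout write $\Omega:=\Omega^+(K)$. The inclusion $\partial^*\Omega\subseteq\partial\Omega$ holds for every closed set, since a point of $(n+1)$-dimensional density in $(0,1)$ can lie neither in the interior nor in the exterior of $\Omega$. So the whole content of the lemma is the reverse inclusion, i.e. the assertion that $0<\Theta^{n+1}(\Omega,(t,x))<1$ for every $(t,x)\in\partial\Omega$. The plan is to check this separately on the two ``caps'' $\partial\Omega\cap\{t=\tau_0\}$, $\partial\Omega\cap\{t=\tau\}$ and on the ``lateral'' part $\partial\Omega\cap\{\tau_0<t<\tau\}$.

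Since $\Omega\subseteq\{\tau_0\le t\le\tau\}$ and the extreme slices are $\Omega^+_{\tau_0}(K)=K$, $\Omega^+_{\tau}(K)$, the caps are exactly $\{\tau_0\}\times K$ and $\{\tau\}\times\Omega^+_{\tau}(K)$. At a cap point, say $(\tau_0,x)$ with $x\in K$, I immediately get $\Theta^{n+1}(\Omega,(\tau_0,x))\le\tfrac12$ because $\Omega$ sits in the half-space $\{t\ge\tau_0\}$; in particular the density is never $1$ there. That it is not $0$ either I would derive by a short direct estimate: for small $s>0$ the reachable set $\Omega^+_{\tau_0+s}(K)$ contains $B(x',\delta-Cs)$ whenever $B(x',\delta)\subseteq K$ ($C$ a bound on trajectory speeds), and when $x\in\partial K$ one combines this with Proposition~\ref{prop:tubular} applied to $K$ to obtain a set of positive $n$-density at $x$; fattening it in the time variable produces a subset of $\Omega$ of positive $(n+1)$-density at $(\tau_0,x)$. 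Hence every cap point already lies in $\partial^*\Omega$, with density $\tfrac12$ at interior-of-$K$ points.

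For a lateral point $(\bar t,\bar x)$ I invoke Theorem~\ref{thm:intball}: near $(\bar t,\bar x)$ the funnel coincides with a compact $(n+1)$-dimensional tubular neighbourhood $\widetilde\Omega=B(A',\rho)$, so it suffices to argue for $\widetilde\Omega$. The interior-ball property recalled in the proof of Proposition~\ref{prop:tubular} supplies a ball of radius $\rho$ inside $\widetilde\Omega$ through $(\bar t,\bar x)$, giving $\Theta^{n+1}(\widetilde\Omega,(\bar t,\bar x))\ge\tfrac12>0$. The hard part will be the opposite bound, $\Theta^{n+1}(\widetilde\Omega^c,(\bar t,\bar x))>0$, and this is precisely where the dynamical structure must enter, since a bare compact tubular neighbourhood can possess density-one topological boundary points (two balls tangent at a point being the model example). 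My plan here is as follows. The set of unit vectors pointing from $(\bar t,\bar x)$ to its nearest points in $A'$ — equivalently, the set of admissible inward ball directions at $(\bar t,\bar x)$ — is a proper compact subset of the sphere: if $0$ lay in the interior of its convex hull, a union of the corresponding interior balls would contain a whole neighbourhood of $(\bar t,\bar x)$, contradicting $(\bar t,\bar x)\in\partial\widetilde\Omega$. It then remains to exclude the degenerate configuration in which these directions surround $0$ only inside a proper subspace; here I would use the Lipschitz bound on trajectories together with the Hamilton--Jacobi identity of Proposition~\ref{prop:proximal} in the backward form of Remark~\ref{rem:backward} (which keeps proximal normals from becoming too ``vertical''), so as to produce a genuine direction $\nu$ along which $d_{A'}$ increases at unit rate — alternatively, one shows directly that integral funnels of such inclusions do not pinch at interior times, the interior balls varying along trajectories. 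Along $\nu$ one then fits a solid cone with vertex $(\bar t,\bar x)$ inside $\{d_{A'}>\rho\}=\widetilde\Omega^c$, which yields $\Theta^{n+1}(\widetilde\Omega^c,(\bar t,\bar x))>0$.

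Combining the two cases, every point of $\partial\Omega$ has $(n+1)$-density in $(0,1)$, so $\partial\Omega\subseteq\partial^*\Omega$, and together with the trivial inclusion the two boundaries coincide. The one step I expect to cost real work is the exclusion of density-one lateral boundary points: bounding $d_{A'}$ from above near such a point, so that the complement is not $\mathcal{H}^{n+1}$-negligible, cannot be done with the interior-ball property alone and forces one to feed in that $\widetilde\Omega$ is locally an integral funnel rather than an arbitrary tubular neighbourhood.
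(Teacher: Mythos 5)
Your reduction to ``$0<\Theta^{n+1}(\Omega,(t,x))<1$ at every topological boundary point'' is the right frame, and two of the three ingredients are sound. Your treatment of the caps (upper bound $1/2$ from the half-space; lower bound from the inclusion $\Omega^+_{\tau_0+s}(K)\supseteq B(x',\delta-Cs)$ for an interior ball $B(x',\delta)\subseteq K$, which is correct because any backward solution from a point of $B(x',\delta-Cs)$ lands in $B(x',\delta)\subseteq K$) is a legitimate and in fact simpler alternative to the paper's argument, which instead builds a thin flow tube $\Pi=\{(t,y)\colon y\in P_{\tau_0,t}(A)\}$ over an interior ball $A\subseteq K$ and bounds $\mathcal{H}^{n+1}(\Pi)$ from below via the Reynolds transport theorem and $\div g\geq -nL_1$. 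The lateral lower bound via the interior-ball property supplied by Theorem~\ref{thm:intball} matches the paper.

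The genuine gap is the step you yourself flag: excluding density-one lateral boundary points. What you offer there is a plan, not a proof, and the tools you name do not deliver it. First, Proposition~\ref{prop:proximal} and Remark~\ref{rem:backward} only constrain proximal normals \emph{where they exist}, and at precisely the dangerous (cusp-like) boundary points the proximal normal cone can be empty; moreover the identity $\theta+H(t,x,-\zeta)=0$ relates the components of a normal but gives no quantitative exterior estimate on $d_{A'}$. Second, knowing that the nearest-point directions $\nu$ do not positively span $\reali^{n+1}$ only yields a $w$ with $w\cdot\nu\leq 0$ for all such $\nu$; this does not give linear growth of $d_{A'}$ along $w$ (for two tangent balls, $w\perp\nu$ gives only quadratic growth), so the passage from ``no positive spanning'' to ``a solid exterior cone inside $\{d_{A'}>\rho\}$'' is exactly the missing content, and your fallback ``funnels do not pinch at interior times'' is the statement to be proved, not an argument. (An exterior cone is also more than needed: positive upper density of the complement suffices.) The paper closes this step by a different mechanism that your outline never invokes: Lipschitz dependence of the slices $t\mapsto\Omega^+_t(K)$ in the Hausdorff metric gives $\Omega^+(K)\subseteq C=\{(s,y)\colon y\in B\left(\Omega^+_t(K),L|s-t|\right)\}$, and the density at $(t,x)$ is then bounded by $\Theta^{n+1}(C,(t,x))<1$. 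If you want to salvage your route, the dynamical fact to feed in is a spacetime one — e.g.\ that a point $(s,y)\notin\Omega^+(K)$ with $s\in(\tau_0,\tau)$ carries with it \emph{all} backward trajectories through it, which stay outside the funnel — but as written the lateral upper bound is unproved, so the lemma is not established.
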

\begin{proof}
Below, for the sake of brevity, we write $\Omega$ and $\Omega_t$ instead of $\Omega^+(K)$ and $\Omega^+_t(K)$.

Let us fix a point $(t,x)\in\partial\Omega$ and estimate the density of $\Omega$ at $(t,x)$.

\textbf{1.} Suppose that  $\tau_0<t<\tau$. As in the proof of Proposition~\ref{prop:tubular}, we find that
\[
\Theta^{n+1}\left(\Omega,(t,x)\right)\geq 2^{-(n+1)}.
\]
On the other hand, our assumptions yield that the set-valued map $t\mapsto \Omega_t$ is Lipschitz.
In other words,
\[
  \Omega_s \subseteq B\left(\Omega_t, L|s-t|\right)\qquad\text{for all}\quad  s\in [\tau_0,\tau],
\]
or, equivalently, 
\[
\Omega\subseteq C = \left\{(s,y)\;\colon\;y\in B\left(\Omega_t, L|s-t|\right)\right\}.
\]
The latter implies that $\Theta^{n+1}\left(\Omega,(t,x)\right)\leq \Theta^{n+1}\left(C,(t,x)\right)<1$.

\textbf{2.} If $t$ is either $\tau_0$ or $\tau$, we obviously get that $\Theta^{n+1}\left(\Omega,(t,x)\right)\leq 2^{-1}$.
It remains to establish the inequality $\Theta^{n+1}\left(\Omega,(t,x)\right) > 0$.

We begin with the case $t=\tau_0$. Consider the $(n+1)$-dimensional ball $B\big((\tau_0,x),r\big)$ with a sufficiently small radius $r$. Since $K$ is a tubular neighbourhood, there exists an $n$-dimensional ball $A\subseteq K$ of radius $r/2$ with $x\in \partial A$.

Take a map $u\in \L\infty([\tau_0,\tau];U)$ and denote by $P_{s,t}$ the phase flow of the vector field $(t,x)\mapsto g\left(t,x,u(t)\right)$.
Clearly, we have $P_{\tau_0,t}(A)\in \Omega_t$, for all $t\in [\tau_0,\tau]$. Since $g$ is continuous according to $(\mathbf{g_1})$,
its norm is bounded on $\Omega\times U$ by some positive $c$. Therefore, the set
\[
\Pi = \left\{(t,y)\;\colon\;t\in [\tau_0,t_*],\; y\in P_{\tau_0,t}(A)\right\}
\]
is surely contained in $\Omega\cap B\left((\tau_0,x),r\right)$ when $t_*=\tau_0+\tfrac{r}{2\sqrt{1+c^2}}$.

Let us estimate $\mathcal{H}^{n+1}(\Pi)$. Using the notation $V(t) = \mathcal{H}^n\left(P_{\tau_0,t}(A)\right)$, we can write
\[
\mathcal{H}^{n+1}(\Pi) = \int_{\tau_0}^{t_*} V(t)\d t.
\]
According to the Reynolds transport theorem~\cite{LorenzRTT}, $V$ is absolutely continuous and
\begin{equation}
\label{eq:RTT}
\frac{d}{dt} V(t) = \int_{P_{\tau_0,t}(A)}\div g\left(t,y,u(t)\right)\d y\quad\text{for a.e.}\quad  t\in [\tau_0,\tau].
\end{equation}
It follows from $(\mathbf{g_2})$ that $\div g\geq -n L_1$, which together with~\eqref{eq:RTT} implies the inequality $V(t)\geq V(\tau_0)e^{-nL_1(t- \tau_0)}$. 
Using the latter estimate, we find that
\[
\mathcal{H}^{n+1}(\Pi) \geq \int_{\tau_0}^{t_*} V(\tau_0)e^{-nL_1(t- \tau_0)} \d t = \frac{V(\tau_0)}{nL_1}
\left(1 - e^{-\frac{nL_1}{2\sqrt{1+c^2}}r}\right).
\] 
Dividing by $\omega_{n+1}r^{n+1}$ and passing to the limit as $r\to 0$, we obtain
\[
\Theta^{n+1}\left(\Omega,(\tau_0,x)\right)\geq \frac{\omega_n}{nL_1 \omega_{n+1}}\lim_{r\to 0}
\frac{1}{r}
\left(1 - e^{-\frac{nL_1}{2\sqrt{1+c^2}}r}\right) = \frac{\omega_{n}}{2\sqrt{1+c^2}\omega_{n+1}},
\] 
which is strictly greater than zero, as desired.

The case $t=\tau$ can be considered in the similar way. Of course, $P_{\tau_0,t}$ should be replaced with $P_{t,\tau}$ and $K$ with $\Omega_\tau(K)$, which is also a tubular neighbourhood
by~\cite[Theorem 2.1]{Lorenz05}.
\end{proof}

\begin{proofof}{Proposition~\ref{prop:rectifiable}}
As before, we write $\Omega$ and $\Omega_t$ instead of $\Omega^+(K)$ and $\Omega^+_t(K)$.

\begin{wrapfigure}{hR}{0.50\textwidth}
\centering
\includegraphics[width=0.47\textwidth]{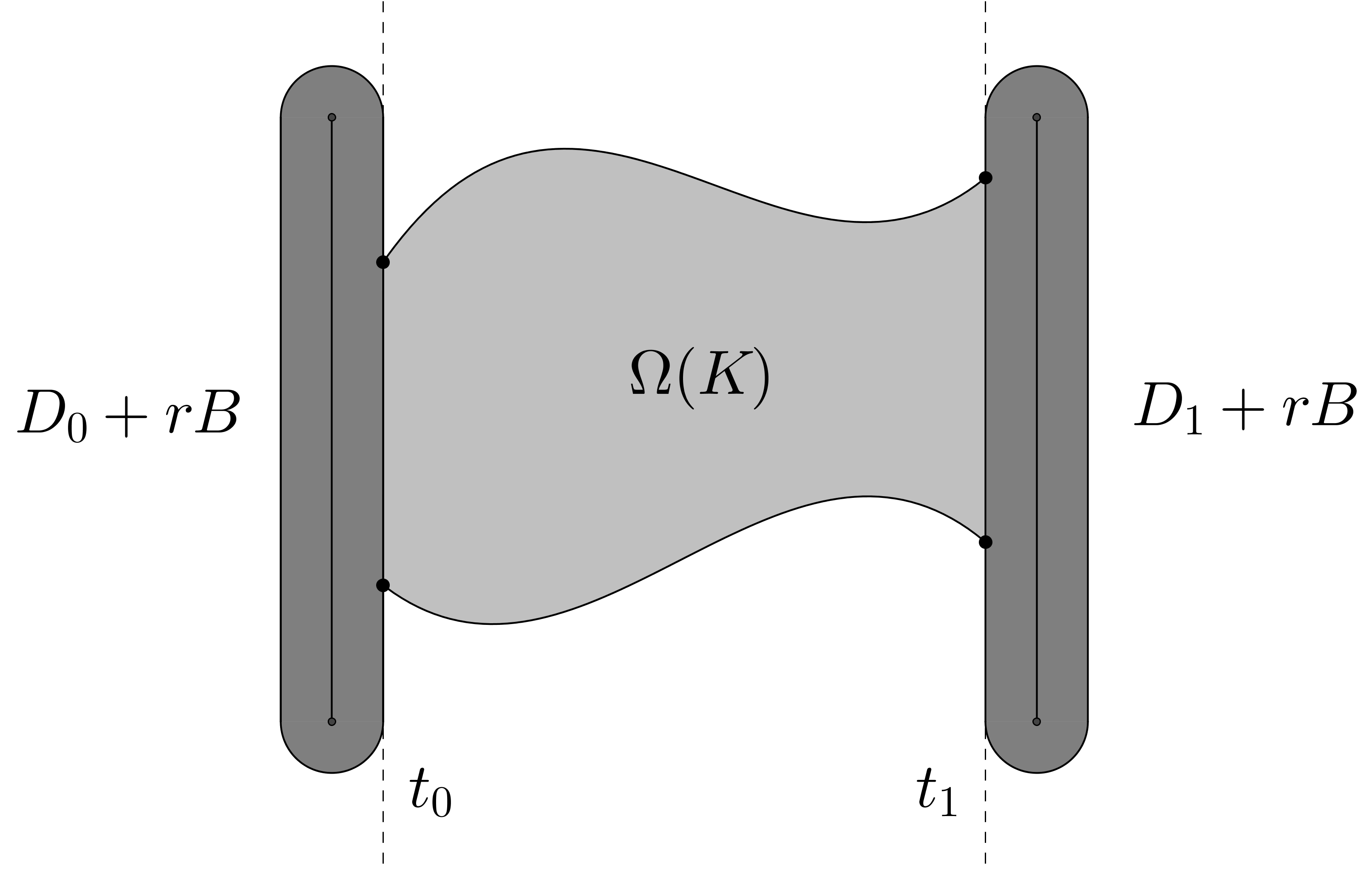}
\caption{\label{fig:ibp} The funnel with two enlarged discs.\vspace{10pt}}
\end{wrapfigure}

\textbf{1.} Let us show that $\partial\Omega$ is $n$-rectifiable.
First, note that Theorem~\ref{thm:intball} implies that $\Omega$ is a part of a \emph{compact} $(n+1)$-dimensional 
tubular neighbourhood. Indeed, take two $n$-dimensional discs $D_0$ and $D_1$ lying in $\reali^{n+1}$,
enlarge them to get $B(D_0,r)$ and $B(D_1,r)$, and attach these sets to $\Omega^+(K)$ as is shown on Figure~\ref{fig:ibp}.
The resulting set 
\[
A=\Omega\cup B(D_0,r)\cup B(D_1,r)
\]
is a compact $(n+1)$-dimensional tubular neighbourhood, as desired. 

Now, Proposition~\ref{prop:tubular} implies that $\partial A$ is $n$-rectifiable. One may easily check that a finite union of $n$-rectifiable sets 
and a subset of any $n$-rectifiable set are $n$-rectifiable. These facts allow us to derive the $n$-rectifiability of $\partial\Omega$.

\textbf{2.} Taking into account the rectifiability of $\partial \Omega$, we may apply Proposition 4.1 from~\cite{Villa}
to get
\[
\mathcal{SM}^{n+1}\left(\Omega\right) = P\left(\Omega\right) + 2\mathcal{H}^n\left(\partial \Omega\cap \Omega^0\right).
\]
By Lemma~\ref{lem:boundaries}, the last term from the right-hand side is zero
and $P(\Omega) = \mathcal{H}^{n}(\partial\Omega)$. Therefore,
$\mathcal{SM}^{n}(\Omega)=\mathcal{H}^{n}(\partial \Omega)$, which is finite due
to $n$-rectifiability of $\partial \Omega$.

\textbf{3.} It remains to note that part (b) of the statement follows from~\cite[Theorem 2.1]{Lorenz05}.
\end{proofof}

Next, we put here a small auxiliary lemma, which directly follows from Proposition 4.1 of~\cite{Villa}.
\begin{lemma}
\label{lem:content} Let $A\subset \reali^n$ be $(n-2)$-rectifiable. Then $\mathcal{SM}^n(A) = 0$.
\end{lemma}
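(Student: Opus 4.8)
The plan is to reduce the statement to the already-cited Proposition 4.1 of~\cite{Villa}, exactly as was done for Proposition~\ref{prop:tubular} and in step 2 of the proof of Proposition~\ref{prop:rectifiable}, the only new input being a dimension count. Suppose $A\subset\reali^n$ is $(n-2)$-rectifiable. First I would recall that $(n-2)$-rectifiability forces $\mathcal{H}^{n-1}(A)=0$: indeed, a Lipschitz image of a bounded subset of $\reali^{n-2}$ has finite, hence $\sigma$-finite, $(n-2)$-dimensional Hausdorff measure, and a set of $\sigma$-finite $\mathcal{H}^{n-2}$ measure is $\mathcal{H}^{s}$-null for every $s>n-2$; in particular $\mathcal{H}^{n-1}(A)=\mathcal{H}^n(A)=0$.

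Next I would identify the densities that enter Villa's formula. Since $\mathcal{H}^n(A)=0$, the set $A$ has density $0$ at $\mathcal{H}^n$-almost every point, so $\mathcal{H}^n(A\cap A^1)=0$ and $\mathcal{H}^n(A\setminus A^0)=0$; in particular $\mathcal{H}^n(\partial^* A)=0$, which is to say $P(A)=0$ by the formula $P(A)=\mathcal{H}^{n-1}(\partial^*A)$ used in the Appendix (here $\partial^*A\subseteq A$ up to $\mathcal{H}^n$-null sets is irrelevant; what matters is $\mathcal{H}^{n-1}(\partial^* A)\le \mathcal{H}^{n-1}(A)=0$). Likewise $\mathcal{H}^n(\partial A\cap A^0)\le\mathcal{H}^n(A)=0$. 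Feeding these two facts into
\[
\mathcal{SM}^n(A)=P(A)+2\,\mathcal{H}^n\!\left(\partial A\cap A^0\right)
\]
from Proposition 4.1 of~\cite{Villa} gives $\mathcal{SM}^n(A)=0$.

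One point needs a little care: Proposition 4.1 of~\cite{Villa} presupposes that the set in question is $(n-1)$-rectifiable, so that its outer Minkowski content is well defined and equals the stated sum. Here $A$ is only assumed $(n-2)$-rectifiable, but every $(n-2)$-rectifiable set is a fortiori $(n-1)$-rectifiable (compose the given Lipschitz parametrization with the inclusion $\reali^{n-2}\hookrightarrow\reali^{n-1}$, or simply note that a subset of an $(n-1)$-rectifiable set is $(n-1)$-rectifiable, as already observed in step 1 of the proof of Proposition~\ref{prop:rectifiable}), so the hypothesis of Villa's proposition is met and the formula applies verbatim.

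The main obstacle, such as it is, is purely bookkeeping: making sure the dimension count in the first paragraph is airtight (that $\sigma$-finiteness of $\mathcal{H}^{n-2}$ on $A$ really does kill $\mathcal{H}^{n-1}$), and confirming that the two terms in Villa's decomposition are controlled by $\mathcal{H}^{n-1}(A)$ and $\mathcal{H}^{n}(A)$ respectively — both of which vanish. No genuinely hard estimate is involved; the lemma is essentially a corollary of~\cite{Villa} together with the elementary fact that rectifiability in codimension two leaves both the perimeter and the exceptional density-zero boundary portion Hausdorff-null.
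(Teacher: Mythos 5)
Your argument is correct and is essentially the paper's own proof: both reduce the lemma to Proposition 4.1 of~\cite{Villa} after observing that $(n-2)$-rectifiability gives $(n-1)$-rectifiability together with $\mathcal{H}^{n-1}(A)=\mathcal{H}^{n}(A)=0$, so that both terms in Villa's decomposition vanish. The only caveat is that the second term in that formula is $2\mathcal{H}^{n-1}\left(\partial A\cap A^{0}\right)$ (the exponent $n$ you copied comes from a slip in the proof of Proposition~\ref{prop:tubular}; the paper's proof of this lemma uses $n-1$), but this changes nothing, since your bound $\mathcal{H}^{n-1}(A)=0$ controls that term as well via $\partial A\cap A^{0}\subseteq\partial A\subseteq A$ for closed $A$, exactly as the paper implicitly argues.
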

\begin{proof}
Since $A$ is $(n-2)$-rectifiable, it is $(n-1)$-rectifiable as well. Thus, we may apply
Proposition 4.1~\cite{Villa}, which gives
\[
\mathcal{SM}^{n}(A) = P(A) + 2\mathcal{H}^{n-1}(\partial A\cap A^0)\leq 3 \mathcal{H}^{n-1}(\partial A) = 0.
\]
\end{proof}

We conclude the section with another technical lemma. Suppose that $h\in
(0,\infty)$ and $U_h,U\colon [\tau_0,\tau]\to \mathcal{K}(\reali^l)$ are
measurable set-valued maps with the following properties: a) $U_h(t)\to U(t)$
for each $t\in [\tau_0,\tau]$ as $h\to 0$, b) $U_h$ and $U$ take values in a
bounded set $B\subset \mathbb{R}^l$, at least, for all sufficiently small $h$.
Let $\Omega^{h+}(K)$ and $\Omega^+(K)$ denote the forward integral funnels of
the following differential inclusions
\begin{align*}
  \dot x(t)\in \co g\left(t,x,U_h(t)\right), \qquad
  \dot x(t)\in \co g\left(t,x,U(t)\right),\qquad t\in [\tau _0, \tau ].
\end{align*}

\begin{lemma}
  \label{lem:Omega_conv}
  Let $K$ be an $n$-dimensional tubular neighbourhood. Then, under the assumptions 
  $\mathbf{(g_1)}$--$\mathbf{(g_3)}$, we have
  \begin{equation}\label{eq:Omega_conv}
    \lim_{h\to 0}\mathcal{L}^n\left(\Omega^{h+}_{t}(K)\triangle\Omega^+_{t}(K)\right) = 0,
  \end{equation}
  where $\triangle$ stands for the symmetric difference between two sets.
\end{lemma}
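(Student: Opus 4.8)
The plan is to deduce~\eqref{eq:Omega_conv} from two facts: Hausdorff convergence of the slices $\Omega^{h+}_t(K)\to\Omega^+_t(K)$, and a \emph{uniform} interior‑ball estimate for the whole family $\{\Omega^{h+}_t(K)\}_h$, the second being what turns the first into convergence of the Lebesgue measure of the symmetric difference.

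\textbf{Step 1 (Hausdorff convergence).} Put $F_h(t,x)=\co g(t,x,U_h(t))$ and $F(t,x)=\co g(t,x,U(t))$. By (b) there is a ball $B(0,R)$ containing $K$ and all the sets $U_h(t)$, $U(t)$ (for small $h$), and, after a Gronwall estimate on the dynamics, all the relevant trajectories. Since $g$ is continuous, hence uniformly continuous on the compact set $[\tau_0,\tau]\times B(0,R)\times\overline B$, and since convexification is $1$‑Lipschitz for $d_H$, the assumption $U_h(t)\to U(t)$ gives, for each fixed $t$, that $\delta_h(t):=\sup_{x\in B(0,R)}d_H(F_h(t,x),F(t,x))\to0$ as $h\to0$; moreover $\delta_h$ is uniformly bounded. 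By dominated convergence $\int_{\tau_0}^{\tau}\delta_h(t)\,\d t\to0$, and a Filippov‑type estimate (Gronwall with the $x$‑Lipschitz constant $L_1$ of $F_h$) then shows that every trajectory of one inclusion is uniformly $e^{L_1(\tau-\tau_0)}\!\int_{\tau_0}^{\tau}\delta_h(t)\,\d t$‑close to a trajectory of the other. In particular $\rho_h:=\sup_{t\in[\tau_0,\tau]}d_H(\Omega^{h+}_t(K),\Omega^+_t(K))\to0$; one may also simply invoke the continuous‑dependence result~\cite[Theorem~5.4, p.~213]{Tolstonogov} already used in the paper.

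\textbf{Step 2 (from $\rho_h$ to the symmetric difference).} Fix $t$. The inclusion $\Omega^{h+}_t(K)\subseteq B(\Omega^+_t(K),\rho_h)$, together with $\bigcap_{\rho>0}B(\Omega^+_t(K),\rho)=\Omega^+_t(K)$ and continuity from above of $\mathcal{L}^n$ (all sets involved are bounded), yields $\mathcal{L}^n(\Omega^{h+}_t(K)\setminus\Omega^+_t(K))\le\mathcal{L}^n(B(\Omega^+_t(K),\rho_h)\setminus\Omega^+_t(K))\to0$, with no regularity needed. The reverse inclusion $\Omega^+_t(K)\setminus\Omega^{h+}_t(K)\subseteq B(\Omega^{h+}_t(K),\rho_h)\setminus\Omega^{h+}_t(K)$ is the delicate one, because the set being eroded now varies with $h$. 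Here I would use that, $K$ being an $n$‑dimensional tubular neighbourhood and $g$ satisfying $(\mathbf{g_{1}})$--$(\mathbf{g_{3}})$ with constants $L_1,L_2$ independent of $h$, the interior‑ball property propagates along the flow at a rate governed only by $L_1$, $L_2$ and $\tau-\tau_0$; consequently every slice $\Omega^{h+}_t(K)$ is a tubular neighbourhood whose interior radius is bounded below by a single $\rho_*>0$. (For a time‑independent control set this is exactly Theorem~\ref{thm:intball} together with~\cite[Theorem~2.1]{Lorenz05}; the present Carathéodory setting is recovered by approximating $U_h$ and $U$ by piecewise‑constant controls.) A set with the interior‑ball property of radius $\ge\rho_*$ contained in $B(0,R)$ obeys a uniform Minkowski‑type bound $\mathcal{L}^n(B(E,\rho)\setminus E)\le C_*\rho$ for all $\rho<\rho_*$, with $C_*=C_*(\rho_*,R,n)$; applying it with $E=\Omega^{h+}_t(K)$ gives $\mathcal{L}^n(\Omega^+_t(K)\setminus\Omega^{h+}_t(K))\le C_*\rho_h\to0$. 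Adding the two estimates proves~\eqref{eq:Omega_conv}.

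The hard part is precisely this reverse estimate: Hausdorff convergence of compact sets does not by itself force convergence of Lebesgue measures — a perturbed set can differ from the limit by a nowhere‑dense set of positive measure but arbitrarily small diameter — so the uniform interior‑ball bound for the family $\{\Omega^{h+}_t(K)\}_h$ is genuinely required. It is worth noting that in the one place where the lemma is used, the passage to time‑dependent bounds in Section~4, the approximating controls satisfy $U_h(t)\supseteq U(t)$, so $\Omega^{h+}_t(K)\supseteq\Omega^+_t(K)$, the difference $\Omega^+_t(K)\setminus\Omega^{h+}_t(K)$ is empty, and only the elementary first inequality of Step~2 is needed; the uniform‑regularity argument is called upon only for the lemma in its stated generality.
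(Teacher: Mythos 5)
Your proposal is correct and follows essentially the same route as the paper: Hausdorff convergence of the slices via a Filippov/Gronwall argument, then conversion to convergence in Lebesgue measure using the $h$-independent tubular (interior-ball) radius of the reachable sets provided by~\cite[Theorem 2.1]{Lorenz05}. The only real difference is cosmetic: where you split the symmetric difference asymmetrically and assert the linear Minkowski bound $\mathcal{L}^n\left(B(E,\rho)\setminus E\right)\leq C_*\rho$ for uniformly interior-ball sets, the paper derives the corresponding (logarithmic) estimate explicitly from the Reynolds transport theorem and~\cite[Lemma 2.5]{alvarez_cardaliaguet_monneau_05}, applied symmetrically to both slices.
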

\begin{proof}
  \textbf{1.} Let us show that $\Omega^{h+}_{t}(K)$ and $\Omega^+_{t}(K)$ converge 
  in the Hausdorff distance. To this end, consider two trajectories $x(\cdot)$ and 
  $x_h(\cdot)$ defined by
  \begin{align*}
    \dot x (t) & = g\left(t,x(t),u(t)\right), &x(\tau _0) &= x_0, 
                  &u(t)&\in U(t),\\
    \dot x_h (t) & = g\left(t,x(t),u_h(t)\right), &x_h(\tau _0) &= x_0, 
                  &u_h(t)&\in U_h(t),
  \end{align*}
  where $u_h(t)$ is the projection of $u(t)$ onto $U_h(t)$. Obviously, we may write
  \begin{align*}
    \left|x(t)-x_h(t)\right| &\leq 
    \int_0^t 
    \left|g\left(s,x(s),u(s)\right)-g\left(s,x_h(s),u_h(s)\right)\right| 
    \d s \\
    &\leq
    \int_0^t 
    \left|g\left(s,x(s),u_h(s)\right)-g\left(s,x_h(s),u_h(s)\right)\right| 
    \d s \\ 
    &+
    \int_0^t 
    \left|g\left(s,x(s),u(s)\right)-g\left(s,x(s),u_h(s)\right)\right| 
    \d s.
  \end{align*}
  According to $\mathbf{(g_2)}$, the first integral from the right-hand side can be estimated
  by 
  \[
    \int_0^t L_1\left|x(s)-x_h(s)\right|\d s. 
  \]
  Hence, denoting the second integral by $c_h(t)$, 
  we obtain
  \begin{equation*}
     \left|x(t)-x_h(t)\right| \leq \int_0^t L_1\left|x(t)-x_h(t)\right|\d s + c_h(t).
  \end{equation*}
  Gronwall's lemma yields that
  \begin{equation}
    \label{eq:lem_gron}
    \left|x(t)-x_h(t)\right|\leq c_h(t) e^{L_1t}.
  \end{equation}
  Since $|u(t)-u_h(t)|\leq d_H\left(U(t),U_h(t)\right)$ for each $t\in
  [\tau_0,\tau]$, we conclude that $u_h$ pointwise converges to $u$. Now
  assumptions $\mathbf{(g_1)}$ and $\mathbf{(g_2)}$ together with Lebesgue's
  dominated convergence theorem imply that $c_h(t)\to 0$ for each $t\in
  [\tau_0,\tau]$. Hence $x_h(t)\to x(t)$ for all $t\in [\tau_0,\tau]$,
  by~\eqref{eq:lem_gron}. Now, from Filippov's lemma~\cite{Filippov62} and the
  density theorem~\cite[Theorem 2.1., Chapter 3]{Tolstonogov} it follows that
  $d_H\left(\Omega^{h+}_t(K),\Omega^+_t(K)\right)\to 0$ for each $t\in
  [\tau_0,\tau]$.

  \textbf{2.} Let us prove that, for any two tubular neighbourhoods $A$, $A'$ of radius $r$, we have
  \begin{equation}\label{eq:LebHaus}
    \mathcal{L}^n(A\triangle A') \leq
    n \omega _n\frac{(\diam A)^n+(\diam A')^n}{2^n}\ln\left(1+\frac{d_H(A,A')}{r}\right).
  \end{equation}
  Since $\mathcal{L}^n(A\triangle A') = \mathcal{L}^n(A\setminus
  A')+\mathcal{L}^n(A'\setminus A)$, we proceed examining each term from the
  right-hand side separately. First, note that $A'\subseteq B(A,l)$, where $l =
  d_H(A,A')$. Thus,
  \[
    \mathcal{L}^n(A'\setminus A)\leq \mathcal{L}^n \left( B(A,l) \right)- \mathcal{L}^n(A).
  \]
  The Reynolds transport theorem~\cite{LorenzRTT} yields
 \[
    \mathcal{L}^n \left( B(A,l) \right)- \mathcal{L}^n(A) = 
    \int_0^l \mathcal{H}^{n-1}\left(\partial B(A,t)\right)\d t.
 \]
 Lemma 2.5~\cite{alvarez_cardaliaguet_monneau_05} gives the following estimate
 \[
   \mathcal{H}^{n-1}\left(\partial B(A,t)\right)\leq \frac{n\, \omega _n (\diam A)^n}{2^n(t+r)}.
 \]
 Thus, 
 \[
    \mathcal{L}^n \left( B(A,l) \right)- \mathcal{L}^n(A) 
    \leq \frac{n\, \omega _n (\diam A)^n}{2^n}\ln\left(1+\frac{l}{r}\right).
 \]
 Interchanging the roles of $A$ and $A'$, we obtain the same inequality for
 $A'$, and therefore~\eqref{eq:LebHaus}.

 \textbf{3.} By Theorem 2.1~\cite{Lorenz05} the sets $\Omega _t^+(K)$ and 
 $\Omega _t^{h+}(K)$ 
 can be considered as tubular neighbourhoods of common radius $r$ that does not
 depend on $h$.  
 Moreover, because of assumptions $\mathbf{(g_1)}$ and $\mathbf{(g_2)}$, all 
 those sets lie in a ball of sufficiently large diameter.
 Thus, inequality~\eqref{eq:LebHaus} can be applied to all of them. Basically, it says that 
 the Hausdorff convergence of the reachable sets implies the convergence with respect
 to the Lebesgue measure. Now, to complete the proof we should only recall its first step.
\end{proof}

\subsection*{Acknowledgment}
The work was supported by the Russian Science Foundation, grant No 17-11-01093.

\small{

  \bibliography{laws}

  \bibliographystyle{abbrv}

}

\end{document}